\documentclass{amsart}
\usepackage[utf8]{inputenc}
\usepackage{bbold}
\usepackage{amssymb}
\usepackage{amsmath}
\usepackage{amsthm}
\usepackage{mathtools}
\theoremstyle{definition}
\newtheorem{definition}{Definition}
\newtheorem{theorem}{Theorem}
\newtheorem{lemma}{Lemma}
\newtheorem{example}{Example}
\newtheorem{proposition}{Proposition}
\newtheorem{notation}{Notation}

\newtheorem{remark}{Remark}

\title{Generalised Fibonacci sequences constructed from balanced words}

\author{Kevin G. Hare}

\address{Department of Pure Mathematics, University of Waterloo, 200 University Avenue West\\
Waterloo, Ontario N2L 3G1, Canada}
\email{kghare@uwaterloo.ca}
\thanks{NSERC grant 2019-03930}

\author{J.C. Saunders}

\address{Department of Mathematics and Statistics, Mathematical Sciences 476
University of Calgary, 2500 University Drive NW\\
Calgary, Alberta T2N 1N4, Canada}
\email{john.saunders1@ucalgary.ca}
\thanks{Azrieli International Postdoctoral Fellowship, Postdoctoral Fellowship at University of Calgary}

\begin{document}

\maketitle

\begin{abstract}
We study growth rates of generalised Fibonacci sequences of a particular structure. These sequences are constructed from choosing two real numbers for the first two terms and always having the next term be either the sum or the difference of the two preceding terms where the pluses and minuses follow a certain pattern. In 2012, McLellan proved that if the pluses and minuses follow a periodic pattern and $G_n$ is the $n$th term of the resulting generalised Fibonacci sequence, then
\begin{equation*}
\lim_{n\rightarrow\infty}|G_n|^{1/n}
\end{equation*}
exists. We extend her results to recurrences of the form $G_{m+2} = \alpha_m G_{m+1} \pm G_{m}$ if the choices of pluses and minuses, and of the $\alpha_m$ follow a balancing word type pattern.
\end{abstract}

\keywords{Fibonacci sequences; Balancing words; matrices}

\section{Introduction}
The Fibonacci sequence, recursively defined by $f_1=f_2=1$ and $f_n=f_{n-1}+f_{n-2}$ for all $n\geq 3$, has been generalised in several ways. In 2000, Divakar Viswanath studied random Fibonacci sequences given by $t_1=t_2=1$ and $t_n=\pm t_{n-1}\pm t_{n-2}$ for all $n\geq 3$.
Here each $\pm$ is chosen to be $+$ or $-$ with probability $1/2$, and 
    are chosen independently.
Viswanath proved that
\begin{equation*}
\lim_{n\rightarrow\infty}\sqrt[n]{|t_n|}=1.13198824\dots
\end{equation*}
with probability $1$ where the exact value of the above limit is given as the exponential function on an integral expression involving a measure defined on Stern–Brocot intervals \cite{viswanath}. Almost nothing is known about this constant, however, not even if it is irrational. One of the key ideas in his proof was to study random Fibonacci sequences by using products of matrices. More specifically, let
\begin{equation*}
A:=\begin{bmatrix}
0 & 1\\
1 & 1
\end{bmatrix}
\text{ and }
B:=\begin{bmatrix}
0 & 1\\
1 & -1
\end{bmatrix}.
\end{equation*}
Then for all $n\in\mathbb{N}$ the $(n+1)$th and $(n+2)$th terms of a random Fibonacci sequence satisfy
\begin{equation*}
[1,1]Q_n=[G_{n+1},G_{n+2}]
\end{equation*}
where $Q_n$ is a matrix product consisting of $n$ $A$s and $B$s as factors where the pattern of $A$s and $B$s reflect the pattern of pluses and minuses generating the random Fibonacci sequence in question. 

In 2006, Jeffrey McGowan and Eran Makover used the formalism of trees to evaluate the growth of the average value of the $n$th term of a random Fibonacci sequence \cite{eran}, which by Jensen's inequality is larger than Viswanath's constant. More precisely, they proved that
\begin{equation*}
1.12095\leq\sqrt[n]{E(|t_n|)}\leq 1.23375
\end{equation*}
where $E(|t_n|)$ is the expected value of the $n$th term of the sequence. In 2007, Rittaud \cite{rittaud} improved this result and obtained
\begin{equation*}
\lim_{n\rightarrow\infty}\sqrt[n]{E(|t_n|)}=\alpha=1\approx 1.20556943\ldots
\end{equation*}
where $\alpha$ is the only real root of $f(x)=x^3-2x^2-1$. In 2010, Janvresse, Rittaud, and De La Rue generalised Viswanath's result to random Fibonacci sequences involving different coefficients and probabilities of the choices of pluses and minuses \cite{janvresse}. In 2012, Karyn McLellan used Viswanath's idea of representing random Fibonacci sequences as matrix products with the matrices $A$ and $B$ as factors to study sequences that begin with two real numbers with the next term always being  either the sum or the difference of the two preceding terms, but where the pattern of the pluses and minuses was periodic instead of random \cite{mclellan}. McLellan determined the growth rate of any such sequence, showing that Viswanath's limit still exists, albeit evaluating to different values, depending on the particular sequence in question. She used these growth rates to provide an alternative method of calculation for Viswanath's constant in the random case.

In 2018, the authors, in \cite{hare}, extended Rittaud's results and determined the probability that a random infinite walk down the tree contains no $(1,1)$ pairs after the initial root. We also determined tight upper and lower bounds on the number of coprime $(a,b)$ pairs at any given depth in the tree for any coprime pair $(a,b)$.

In this paper we consider a more general model.  
Starting with $G_1$ and $G_2$ as any real numbers, consider the recurrence
    $G_{m+2} = \alpha_m G_{m+1} \pm G_{m}$ where the $\alpha_m$ are taken from a finite
    set.  
This can be modeled by matrix multiplication as
\[ [G_{m+1},G_{m+2}] = [G_{m} , G_{m+1}] \begin{bmatrix} 0 & \pm 1 \\ 1 & \alpha_m 
    \end{bmatrix}. \]
Here we extend McLellan's results and show that Viswanath's limit exits if the 
     pattern of matrix multiplications generating the sequence $\left(G_n\right)_n$ follows certain balancing word patterns. Recall that
\begin{definition}
A \textit{balanced word} or \textit{Sturmian word} $w$ is an infinite word over a two letter alphabet $\{a,b\}$ such that, for any two subwords from $w$ of the same length, the number of letters that are $a$ in each of these two subwords will differ by at most $1$. 
\end{definition}
We consider the following construction of a balanced or Sturmian word throughout this paper. There exists a sequence of nonnegative integers $q_0,q_1,a_2,\ldots$ with $q_i>0$ for all $i>0$ such that the infinite set of words $\{s_n\}_{n\geq 0}$ constructed from $s_0=b$, $s_1=a$, and $s_{n+1}=s_n^{q_{n-1}}s_{n-1}$ for all $n\in\mathbb{N}$ converges to $w$. Strumian or balanced words have been great studied, and two relevant references are Allouche and Shallit \cite{allouche} and de Luca \cite{deLuca}. Instead of having the pluses and minuses follow a periodic pattern studied by McLellan, we will have the pluses and minuses follow the pattern in an infinite balanced word.
\begin{notation}\label{P_1P_2}
Let $v\geq 1$, and let 
\[
A_i := \begin{bmatrix} 0 & \epsilon_i \\ 1 & \alpha_i \end{bmatrix} 
\]
for $i = 1, 2, \dots, v$, 
where $\epsilon_i \in \{-1, 1\}$ and $\alpha_i \in \mathbb{Z}$.
We note that each $A_i$ has determinant $\pm 1$.
Let $P_1$ and $P_2$ be products of these matrices of length $k_1$ and $k_2$ respectively, allowing multiplicity.
That is 
\begin{equation*}
P_j:=\begin{bmatrix}
a_j & b_j\\
c_j & d_j
\end{bmatrix}
= A_{j,1} A_{j,2} \dots, A_{j,k_j}.
\end{equation*}
We further require for $j = 1, 2$ that 
\begin{equation}\label{inequalitiesentries}
b_j,c_j\neq 0,\left|d_j\right|\geq 2, \left|a_j\right|\leq\left|b_j\right|\leq\left|d_j\right|\text{ and }\left|a_j\right|\leq\left|c_j\right|\leq\left|d_j\right|.
\end{equation}
For a sequence of positive integers $(q_m)_{m\in\mathbb{N}}$ define $P_m$, $k_m$ and $A_{m, j_1}, \dots A_{m, j_{k_m}}$,  
    inductively as
\begin{align*}
P_{m+2} & := P_{m+1}^{q_m} P_m \\
        & := \underbrace{\underbrace{A_{m+1, j_1} \dots A_{m+1, j_{k_{m+1}}}}_{P_{m+1}} \dots
             \underbrace{A_{m+1, j_1} \dots A_{m+1, j_{k_{m+1}}}}_{P_{m+1}}}_{q_m \text{times}}
             \underbrace{A_{m, j_1} \dots A_{m+1, j_{k_{m}}}}_{P_{m}} \\
        & := A_{m+2, j_1} \dots A_{m+2, j_{k_{m+2}}} \\
        & := \begin{bmatrix}
               a_{m+2}& b_{m+2}\\
               c_{m+2} & d_{m+2}
             \end{bmatrix}.
\end{align*}
Notice that the sequence $\left(P_m\right)_m$ is a standard Sturmian word (or balanced word) on the alphabet $\{P_1,P_2\}$. We observe that for all $m\geq 3$ and $\ell\geq 1$ that $P_{m+\ell}$ always starts by $P_m$, which is a product of $k_m$ matrices. Thus, whenever $n\leq k_m$, the first $n$ matrices of $P_{m+\ell}$ is independent of the choice of $\ell\in\mathbb{N}$. With this observation, we define 
\begin{equation*}
Q_n:= A_{m, 1} \dots A_{m, n} 
   := \begin{bmatrix}
      e_n & f_n\\
      g_n & h_n
\end{bmatrix}
\end{equation*}
for $k_m \geq n$. Notice that so long as $k_m\geq n$, $Q_n$ is independent of the choice of $m$. Finally, let $G_1$ and $G_2$ be any two real numbers and for every $n\in\mathbb{N}$ let $G_{n+2}=\alpha_{m,n}G_{n+1}+\epsilon_{m,n}G_n$ where
\begin{equation*}
A_{m,n}
   := \begin{bmatrix}
      0 & \epsilon_{m,n}\\
      1 & \alpha_{m,n}
\end{bmatrix}
\end{equation*}
Notice that for all $n\in\mathbb{N}$ we have
\begin{equation*}
[G_1,G_2]Q_n=[G_{n+1},G_{n+2}].
\end{equation*}
\end{notation}
We prove the following:
\begin{theorem}\label{bigthm}
Let $q_m$, $P_m$, $a_m$, $c_m$, $Q_n$, and $G_n$ be as defined in Notation \ref{P_1P_2}. Then $\lim_{m\rightarrow\infty}\frac{a_m}{c_m}=\lim_{m\rightarrow\infty}\frac{b_m}{d_m}$ exists, and is positive, and is either $1$ or irrational. Let this limit be denoted by $M$. 
\begin{enumerate}
\item If $M=1$, then $|G_n|$ grows at most linearly, i.e. there exists $C>0$ such that
\begin{equation*}
|G_n|<Cn
\end{equation*}
for all $n\in\mathbb{N}$.
\item If $M$ is irrational and $G_1\neq\frac{-G_2}{M}$, then
\begin{equation}\label{Gnlimit}
\lim_{n\rightarrow\infty}|G_n|^{1/n}
\end{equation}
exists with this limit being greater than $1$.
\end{enumerate}
\end{theorem}
The proof of Theorem \ref{bigthm} is divided up in the subsequent sections of the paper as follows. In Section \ref{sec2} we prove some necessary technical lemmas on the entries in the matrices $\left(P_m\right)_{m\in\mathbb{N}}$. We then divide the proof up into two cases. If $\left|b_i\right|=\left|c_i\right|=\left|d_i\right|-1=\left|a_i\right|+1$ for all sufficiently large $i$, then it turns out that $M=1$ and we get the case that $\left|G_n\right|$ grows at most linearly. This is proved in Section \ref{sec3}. Otherwise we get $M$ is irrational and the exponential growth of $\left|G_n\right|$, which is dealt in Section \ref{sec4}. For the rest of the present section though we include some remarks on Theorem \ref{bigthm} and illustrate them with an example.
\begin{remark}
If we restrict $G_1,G_2\in\mathbb{Z}$ or even to just $G_1,G_2\in\mathbb{Q}$, then we can ignore the condition that $G_1\neq\frac{-G_2}{M}$ in Theorem \ref{bigthm} since $M$ is irrational.
\end{remark}
\begin{remark}\label{alphal}
Let $0<\alpha<1$ be irrational. If in Notation \ref{P_1P_2} we pick the sequence of positive integers $q_1,q_2,q_3,\ldots $ so that the continued fraction expansion of $\alpha$ can be represented as $[0;q_1,q_2,q_3,\ldots]$, then we have
\begin{equation}\label{alphalimit}
\lim_{m\rightarrow\infty}\frac{\text{number of }P_2\text{s in }P_m}{\text{number of }P_1\text{s and }P_2\text{s in }P_m}=\alpha.
\end{equation}
See, for example, \cite{deLuca}. 
\end{remark}
We give some examples of $A_1$, $A_2$, $P_1$, and $P_2$ that satisfy Notation \ref{P_1P_2} with $v=2$.
\begin{remark}
For all $a,b,c,d\in\mathbb{R}$ observe that
\begin{equation*}
\begin{bmatrix}
0 & 1\\
-1 & 0
\end{bmatrix}
\cdot
\begin{bmatrix}
a & b\\
c & d
\end{bmatrix}
=
\begin{bmatrix}
0 & -1\\
1 & 0
\end{bmatrix}
\cdot
\begin{bmatrix}
d & -b\\
-c & a
\end{bmatrix}.
\end{equation*}
It follows that if we replace the inequalities in \eqref{inequalitiesentries} with the inequalities
\begin{equation*}
0\leq\frac{|d_i|}{|b_i|},\frac{|c_i|}{|a_i|},\frac{|d_i|}{|c_i|},\frac{|b_i|}{|a_i|}\leq 1,
\end{equation*}
then Theorem \ref{bigthm} still holds.
\end{remark}
\begin{example}\label{entriessize}
Let 
\begin{equation*}
A:=\begin{bmatrix}
0 & 1\\
1 & 1
\end{bmatrix}
\text{ and }
B:=\begin{bmatrix}
0 & 1\\
1 & -1
\end{bmatrix}.
\end{equation*}
Let $P_1$ and $P_2$ be a product matrices of matrices of the form $A^j$ and $B^k$ where $j,k\geq 2$. Then the matrices $A$, $B$, $P_1$, and $P_2$ satisfy the matrices $A_1$, $A_2$, $P_1$, and $P_2$ respectively in Notation \ref{P_1P_2} with $v=2$ with $P_1$ and $P_2$ satisfying \eqref{inequalitiesentries}.
\end{example}
\begin{remark}
In Theorem \ref{bigthm} it is possible that if the sequence $\left(G_n\right)_n$ grows at most linearly, then it could contain a bounded infinite subsequence of terms. For example, let the matrices $A$ and $B$ be as in Example \ref{entriessize} and let $P_1=P_2=A^3B^3$. Then for all $k\in\mathbb{N}$ we can verify that
\begin{equation*}
Q_{6k+1}=(A^3B^3)^kA=(-1)^k\begin{bmatrix}
4k & 1\\
4k+1 & 1
\end{bmatrix}.
\end{equation*}
We can thus deduce that $|G_{6k+3}|=|G_1+G_2|$ for all $k\in\mathbb{N}$. It is routine to check that the entries in $Q_n$ in Example \ref{entriessize} grow at most linearly and so any corresponding Fibonacci sequence will grow at most linearly.
\end{remark}
We prove that the matrices in Example \ref{entriessize} satisfy Notation \ref{P_1P_2} and Theorem \ref{bigthm} in the next section. We give a concrete example though of matrices $P_1$ and $P_2$ and a sequence of positive integers $(q_m)_{m\in\mathbb{N}}$ for illustration.
\begin{example}
Let $P_1=A^2$ and $P_2=B^2$. Consider the number $1/\pi$, which has continued fraction expansion $[0;3,7,15,1,\ldots]$. Let our sequence of positive integers  $(q_m)_{m\in\mathbb{N}}$ be these convergents so that $q_1=3$, $q_2=7$, $q_3=15$, $q_4=1,\ldots$ Then $P_1$ and $P_2$ satisfy Notation \ref{P_1P_2} and \eqref{inequalitiesentries}. Then we have $P_3=B^6A^2$, $P_4=(B^6A^2)^7B^2,\ldots$ Let $G_1=G_2=1$ in Theorem \ref{bigthm}. Then the corresponding Fibonacci sequence starts out as follows.
\begin{align*}
G_3&=G_1-G_2=1-1=0\\
G_4&=G_2-G_3=1-0=1\\
G_5&=G_3-G_4=0-1=-1\\
G_6&=G_4-G_5=1-(-1)=2\\
G_7&=G_5-G_6=-1-2=-3\\
G_8&=G_6-G_7=2-(-3)=5\\
G_9&=G_7+G_8=-3+5=2\\
G_{10}&=G_8+G_9=5+2=7\\
G_{11}&=G_9-G_{10}=2-7=-5\\
G_{12}&=G_{10}-G_{11}=7-(-5)=12\\
\ldots&
\end{align*}
Also, we have
\begin{equation*}
P_3=\begin{bmatrix}
-3 & -11\\
5 & 18
\end{bmatrix}
\text{ and }
P_4=\begin{bmatrix}
88364872 & -21089221\\
-144059117 & 343812479
\end{bmatrix}
\end{equation*}
and notice that
\begin{equation*}
\frac{1}{2}<\frac{3}{5},\frac{11}{18},\frac{88364872}{144059117},\frac{21089221}{343812479}<\frac{2}{3}.
\end{equation*}
Then by induction $n\in\mathbb{N}$ using Lemma \ref{positive} we have that if
\begin{equation*}
P_n=\begin{bmatrix}
a_n & b_n\\
c_n & d_n
\end{bmatrix},
\end{equation*}
then
\begin{equation*}
\frac{1}{2}<\frac{\left|a_n\right|}{\left|c_n\right|},\frac{\left|b_n\right|}{\left|d_n\right|}<\frac{2}{3}
\end{equation*}
for all $n\geq 3$. Using Lemma \ref{limitsinfinity}, we therefore have that $|b_m|=|c_m|=|d_m|-1=|a_m|+1$ cannot hold for sufficiently large $m$ so that from the work in Section \ref{sec4} the sequence $\left(G_n\right)_n$ grows exponentially with the limit in \eqref{Gnlimit} existing and greater than $1$. As well, we can deduce from Remark \ref{alphal} that the fraction of $+$'s creating the Fibonacci sequence tends to $1/\pi$.
\end{example}
\section{Preliminary Results}\label{sec2}
To prove Theorem \ref{bigthm} we first require some preliminary lemmas.
\begin{lemma}\label{positive}
Suppose we have
\begin{equation*}
\begin{bmatrix}
a_1 & b_1\\
c_1 & d_1
\end{bmatrix}
\cdot
\begin{bmatrix}
a_2 & b_2\\
c_2 & d_2
\end{bmatrix}
=
\begin{bmatrix}
a_3 & b_3\\
c_3 & d_3
\end{bmatrix}
\end{equation*}
where $a_i,b_i,c_i,d_i\in\mathbb{Z}$ for $i=1,2,3$ with $c_1,d_1,b_2,d_2$ nonzero and that the determinants of all the matrices are $1$. Suppose that $|d_1|\geq|c_1|$, $|b_1|\geq|a_1|$, $|d_2|\geq|b_2|$, $|c_2|\geq|a_2|$ and that 
\begin{equation*}
\frac{r_1}{r_2}\leq\frac{|a_1|}{|c_1|},\frac{|b_1|}{|d_1|}\leq\frac{r_3}{r_4}
\end{equation*}
where $r_i\in\mathbb{Z}$ for $i=1,2,3,4$ with $|d_1|>r_2$ and $|d_1|>r_4$. Then
\begin{equation*}
\frac{r_1}{r_2}\leq\frac{|a_3|}{|c_3|},\frac{|b_3|}{|d_3|}\leq\frac{r_3}{r_4}.
\end{equation*}
Also, suppose that
\begin{equation*}
\frac{r_5}{r_6}\leq\frac{|a_2|}{|b_2|},\frac{|c_2|}{|d_2|}\leq\frac{r_7}{r_8}
\end{equation*}
where $r_i\in\mathbb{Z}\backslash\{0\}$ for $i=5,6,7,8$ with $|d_2|>r_6$ and $|d_2|>r_8$. Then
\begin{equation*}
\frac{r_5}{r_6}\leq\frac{|a_3|}{|b_3|},\frac{|c_3|}{|d_3|}\leq\frac{r_7}{r_8}.
\end{equation*}
\end{lemma}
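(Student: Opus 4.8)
The plan is to prove the matrix product entrywise inequalities by a direct computation exploiting the sign structure and the determinant-one constraint. First I would observe that since $a_3 = a_1 a_2 + b_1 c_2$, $b_3 = a_1 b_2 + b_1 d_2$, $c_3 = c_1 a_2 + d_1 c_2$, and $d_3 = c_1 b_2 + d_1 d_2$, the key is to control when these sums of two products reinforce (same sign) versus cancel (opposite sign). The crucial point is that the hypotheses $|d_1|\geq|c_1|$, $|b_1|\geq|a_1|$, $|d_2|\geq|b_2|$, $|c_2|\geq|a_2|$ together with the determinant conditions $a_1 d_1 - b_1 c_1 = \pm 1$ and $a_2 d_2 - b_2 c_2 = \pm 1$ force the two product terms in each of $a_3, b_3, c_3, d_3$ to have the same sign (so there is no cancellation and $|a_3| = |a_1 a_2| + |b_1 c_2|$, etc.), or else one of the terms dominates. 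I would isolate this as the first lemma step: the signs of $a_1 b_1 c_1 d_1$ and $a_2 b_2 c_2 d_2$ are pinned down (up to the overall structure) by $\det = \pm 1$ and the size conditions, so that $|a_3| = |a_1||a_2| + |b_1||c_2|$ and similarly for the other three entries.

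Granting that, the inequalities become purely arithmetic. For the upper bound $\frac{|a_3|}{|c_3|} \leq \frac{r_3}{r_4}$, I would write
\[
\frac{|a_3|}{|c_3|} = \frac{|a_1||a_2| + |b_1||c_2|}{|c_1||a_2| + |d_1||c_2|}
\]
and use the mediant inequality: a fraction $\frac{x+x'}{y+y'}$ with $x,x',y,y'>0$ lies between $\min(\frac xy, \frac{x'}{y'})$ and $\max(\frac xy, \frac{x'}{y'})$. Since $\frac{|a_1|}{|c_1|} \leq \frac{r_3}{r_4}$ by hypothesis and $\frac{|b_1|}{|d_1|} \leq \frac{r_3}{r_4}$ by hypothesis, both constituent ratios are $\leq \frac{r_3}{r_4}$, hence so is the mediant. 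The lower bound is identical with $\frac{r_1}{r_2}$. The same mediant argument, applied to $\frac{|b_3|}{|d_3|} = \frac{|a_1||b_2| + |b_1||d_2|}{|c_1||b_2| + |d_1||d_2|}$, gives the bound on $\frac{|b_3|}{|d_3|}$ using the same two hypotheses $\frac{|a_1|}{|c_1|}, \frac{|b_1|}{|d_1|} \in [\frac{r_1}{r_2}, \frac{r_3}{r_4}]$. The second half of the lemma, bounding $\frac{|a_3|}{|b_3|}$ and $\frac{|c_3|}{|d_3|}$, is completely symmetric: one writes $\frac{|a_3|}{|b_3|} = \frac{|a_1||a_2| + |b_1||c_2|}{|a_1||b_2| + |b_1||d_2|}$ and applies the mediant principle to the pair of ratios $\frac{|a_2|}{|b_2|}, \frac{|c_2|}{|d_2|}$, both assumed in $[\frac{r_5}{r_6}, \frac{r_7}{r_8}]$.

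The role of the integrality conditions $|d_1| > r_2$, $|d_1| > r_4$ (and $|d_2| > r_6$, $|d_2| > r_8$) is slightly delicate: I expect these are not needed for the mediant inequality itself but to rule out degenerate cancellation — specifically to guarantee, via the determinant equation, that the ``wrong sign'' configuration cannot occur when the entries are genuinely small, or to ensure the denominators $|c_3|, |d_3|$ are nonzero so the ratios are well-defined. I would verify, using $a_1 d_1 - b_1 c_1 = \pm 1$ and $|b_1||d_1| \neq 0$, that $|c_3| = |c_1||a_2| + |d_1||c_2| \neq 0$ and $|d_3| \neq 0$.

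The main obstacle I anticipate is the sign-analysis step: carefully showing that no cancellation occurs in the four entries of the product, i.e.\ that $a_1 a_2$ and $b_1 c_2$ share a sign, etc. This requires a careful case analysis on the signs of the eight entries $a_i, b_i, c_i, d_i$, pruned down using the four size hypotheses, the two determinant equations, and the assumed positivity of the displayed ratios. Once the no-cancellation fact is established, the remainder is just the mediant inequality applied four times, which is routine.
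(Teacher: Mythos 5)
The pivotal step in your plan --- that the size hypotheses together with $\det=1$ force the two terms in each entry of the product to reinforce, so that $|a_3|=|a_1||a_2|+|b_1||c_2|$ and similarly for $b_3,c_3,d_3$ --- is false, and with it the reduction to the mediant inequality collapses. For a counterexample satisfying every hypothesis of the lemma, take
$\left[\begin{smallmatrix}1 & 1\\ 1 & 2\end{smallmatrix}\right]\cdot\left[\begin{smallmatrix}1 & -1\\ -1 & 2\end{smallmatrix}\right]$
(these are $A^2$ and $B^2$ from the paper, both of determinant $1$, with $|d_1|\geq|c_1|$, $|b_1|\geq|a_1|$, $|d_2|\geq|b_2|$, $|c_2|\geq|a_2|$): here $c_1b_2=-1$ and $d_1d_2=4$ have opposite signs, so $|d_3|=3\neq 5=|c_1||b_2|+|d_1||d_2|$, and likewise cancellation occurs in the other entries. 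This cancellation case, $\frac{c_1b_2}{d_1d_2}<0$, is exactly the case the paper's proof treats in detail (the reinforcing case is the one it dismisses as ``similar''), and there the mediant principle is not available: one gets $|a_3|=|b_1||c_2|-|a_1||a_2|$ and $|c_3|=|d_1||c_2|-|c_1||a_2|$, and a quotient of differences $\frac{x-x'}{y-y'}$ need not lie between $\frac{x}{y}$ and $\frac{x'}{y'}$ (e.g.\ $\frac{5-4}{10-9}=1$ exceeds both $\frac{5}{10}$ and $\frac{4}{9}$). So your argument, as proposed, proves only the easy half of the lemma.

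Relatedly, your guess about the role of the hypotheses $|d_1|>r_2,r_4$ (and $|d_2|>r_6,r_8$) and the integrality of the $r_i$ is off: they are not there merely to exclude degeneracies or zero denominators, but are the essential input that rescues the bound in the cancellation case. The paper's argument runs: from $|a_1d_1-b_1c_1|=1$ one gets $\frac{|b_1|}{|d_1|}\geq\frac{|a_1|}{|c_1|}-\frac{1}{|c_1||d_1|}$, hence, using $|d_1|\geq|c_1|$ and $|d_1|>r_2$, the strict inequality $r_2|b_1|-r_1|d_1|>r_2|a_1|-r_1|c_1|-1$; since both sides are integers this forces $r_2|b_1|-r_1|d_1|\geq r_2|a_1|-r_1|c_1|\geq 0$, and then $r_2|a_3|-r_1|c_3|=(r_2|b_1|-r_1|d_1|)|c_2|-(r_2|a_1|-r_1|c_1|)|a_2|\geq 0$ because $|c_2|\geq|a_2|$; the upper bound and the second family of ratios are handled the same way. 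To repair your proposal you would need to replace the (false) no-cancellation claim with a two-case analysis and supply this determinant-plus-integrality argument in the cancellation case.
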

\begin{proof}
We will assume that $\frac{c_1b_2}{d_1d_2}<0$. The case of $\frac{c_1b_2}{d_1d_2}>0$ follows similarly.

Since $\frac{c_1b_2}{d_1d_2}<0$, $c_1b_2$ and $d_1d_2$ have opposite signs. Thus $|d_3|=|d_1||d_2|-|c_1||b_2|$ since $|d_1|\geq |c_1|$ and $|d_2|\geq |b_2|$. Similarly, since the determinants of the matrices is $1$, we can argue similarly that $|b_3|=|b_1||d_2|-|a_1||b_2|$, $|c_3|=|d_1||c_2|-|c_1||a_2|$, and $|a_3|=|b_1||c_2|-|a_1||a_2|$. We have
\begin{align*}
r_2|a_3|-r_1|c_3|&=r_2|b_1||c_2|-r_2|a_1||a_2|-r_1|d_1||c_2|+r_1|c_1||a_2|\\
&=(r_2|b_1|-r_1|d_1|)|c_2|-(r_2|a_1|-r_1|c_1|)|a_2|.
\end{align*}
Since the determinants of the matrices is $1$, we have $|b_1||c_1|\geq |a_1||d_1|-1$. Thus we have
\begin{equation*}
\frac{|b_1|}{|d_1|} \geq\frac{|a_1|}{|c_1|}-\frac{1}{|c_1||d_1|}
\text{\ \ \ and\ \ \ }
\frac{|b_1|}{|d_1|}-\frac{r_1}{r_2} \geq\frac{|a_1|}{|c_1|}-\frac{r_1}{r_2}-\frac{1}{|c_1||d_1|}
\end{equation*}
Since $|d_1|\geq|c_1|$ and $|d_1|>r_2$, we have
\begin{equation*}
r_2|b_1|-r_1|d_1|\geq r_2|a_1|-r_1|c_1|-\frac{r_2}{|d_1|}>r_2|a_1|-r_1|c_1|-1.
\end{equation*}
Since $r_2|b_1|-r_1|b_1|,r_2|a_1|-r_1|c_1|\in\mathbb{Z}$, we have $r_2|b_1|-r_1|d_1|\geq r_2|a_1|-r_1|c_1|\geq 0$. Since $|c_2|\geq|a_2|$, we thus have $r_2|a_3|-r_1|c_3|\geq 0$. Thus $\frac{r_1}{r_2}\leq\frac{|a_3|}{|c_3|}$. Next, we have
\begin{align*}
r_3|c_3|-r_4|a_3|&=r_3(|d_1||c_2|-|c_1||a_2|)-r_4(|b_1||c_2|-|a_1||a_2|)\\
&=(r_3|d_1|-r_4|b_1|)|c_2|-(r_3|c_1|-r_4|a_1|)|a_2|.
\end{align*}
Since the determinants of the matrices is $1$, we have $|a_1||d_1|\leq |b_1||c_1|+1$. Thus we have
\begin{equation*}
\frac{|b_1|}{|d_1|} \leq\frac{|a_1|}{|c_1|}+\frac{1}{|c_1||d_1|}
\text{\ \ \ and\ \ \ }
\frac{r_3}{r_4}-\frac{|b_1|}{|d_1|} \geq\frac{r_3}{r_4}-\frac{|a_1|}{|c_1|}-\frac{1}{|c_1||d_1|}.
\end{equation*}
Since $|d_1|\geq|c_1|$ and $|d_1|>r_4$, we have
\begin{equation*}
 r_3|d_1|-r_4|b_1|\geq r_3|c_1|-r_4|a_1|-\frac{r_4}{|d_1|}>r_3|c_1|-r_4|a_1|-1.
\end{equation*}
Since $r_3|d_1|-r_4|b_1|,r_3|c_1|-r_4|a_1|\in\mathbb{Z}$, we have $r_3|d_1|-r_4|b_1|\geq r_3|c_1|-r_4|a_1|\geq 0$. Since $|c_2|\geq |a_2|$, we thus have $r_3|c_3|-r_4|a_3|\geq 0$. Thus $\frac{|a_3|}{|c_3|}\leq\frac{r_3}{r_4}$. The rest of the inequalities follow similarly.
\end{proof}
We prove that the matrices in Example \ref{entriessize} satisfies Notation \ref{P_1P_2} and Theorem \ref{bigthm}.
\begin{proof}
First, $A$ and $B$ consist of integer entries and both matrices have determinate $-1$. We can prove by induction on $j,k\in\mathbb{N}$ that
\begin{equation*}
A^j=
\begin{bmatrix}
F_{j-1} & F_j\\
F_j & F_{j+1}
\end{bmatrix}
\text{\ \ \ and \ \ \ }
B^k=(-1)^k
\begin{bmatrix}
F_{k-1} & -F_k\\
-F_k & F_{k+1}
\end{bmatrix}
\end{equation*}
where $F_k$ is the $k$th Fibonacci number where $F_0=0$, and $F_1=F_2=1$.

Let $P_1$ and $P_2$ be product matrices of $A$s and $B$ satisfying the example. Without loss of generality, it is enough to prove that $P_1$ satisfies the matrix $P_1$ in Notation \ref{P_1P_2}. We prove this by induction on the number of matrices of the form $A^j$ and $B^k$ there are in the product. For the base cases of $P_1=A^k$ and $P_1=B^k$, we have
\begin{equation*}
0\leq\frac{F_{k-1}}{F_k},\frac{F_k}{F_{k+1}}\leq 1,
\end{equation*}
$F_{k+1}\geq 2$, and $F_k\neq 0$.

Suppose the case holds for some product matrix $P$
\begin{equation*}
P=
\begin{bmatrix}
a & b\\
c & d
\end{bmatrix}.
\end{equation*}
First we prove it holds for $PA^j$ where $j\geq 2$. By induction, we have $|d|\geq 2$ and $c\neq 0$. Thus $\frac{cF_{j-1}}{dF_j}\neq 0$.

First assume that $\frac{cF_{j-1}}{dF_j}>0$. By induction, we have all of the inequalities holding in Lemma \ref{positive} with $r_1=0$ and $r_2=r_3=r_4=1$. Lemma \ref{positive} thus gives us all of the desired inequalities holding for $PA^j$ with the observations that $|c||F_j|+|d||F_{j+1}|\geq 2$, $|c|F_{k-1}+|d|F_k\neq 0$, and $|a|F_k+|b|F_{k+1}\neq 0$.

Now assume that $\frac{cF_{j-1}}{dF_j}<0$. By induction, we have all of the inequalities holding in Lemma \ref{positive} with $r_1=r_5=0$ and $r_2=r_3=r_4=r_6=r_7=r_8=1$. Lemma \ref{positive} thus gives us all of the desired inequalities holding for $PA^j$ with the observations that $|d|F_{j+1}-|c|F_j\geq |d|\geq 2$, $d|F_k|-c|F_{k-1}|\neq 0$, and $|d|F_{k+1}-|c|F_k\neq 0$.

The case of $PB^k$ is similar.
\end{proof}
\begin{lemma}\label{ratio1}
Consider a matrix $P$
\begin{equation*}
P=
\begin{bmatrix}
a & b\\
c & d
\end{bmatrix}.
\end{equation*}
where $1\leq |a|<|c|<|d|$, $|a|<|b|<|d|$, and $|\det P|=|ad-bc|=1$. 
\begin{enumerate}
\item Suppose there doesn't exist positive integers $r_1,r_2,r_3,r_4$ with $r_3<r_4$ such that
\begin{equation*}
\frac{r_1}{r_2}\leq\frac{|a|}{|c|},\frac{|b|}{|d|}\leq\frac{r_3}{r_4}
\end{equation*}
and $|d|>r_2,r_4$. Then $|b|=|c|=|d|-1=|a|+1$.
\label{case1}
\item Suppose there doesn't exist positive integers $r_1,r_2,r_3,r_4$ with $r_3<r_4$ such that
\begin{equation*}
\frac{r_1}{r_2}\leq\frac{|a|}{|b|},\frac{|c|}{|d|}\leq\frac{r_3}{r_4}
\end{equation*}
and $|d|>r_2,r_4$. Then $|b|=|c|=|d|-1=|a|+1$.
\label{case2}
\end{enumerate}
\end{lemma}
\begin{proof}
We prove \ref{case1}. Case \ref{case2} follows by taking the transpose of the matrix $P$ and using \ref{case1}.

Suppose $|c|\geq |a|+2$. Then
\begin{equation*}
\frac{1}{|d|-1}\leq\frac{1}{|c|}\leq\frac{|a|}{|c|}\leq\frac{|c|-2}{|c|}<\frac{|c|-1}{|c|}.
\end{equation*}
Also, we have
\begin{equation*}
\frac{|b|}{|d|}\leq\left|\frac{b}{d}-\frac{a}{c}\right|+\frac{|a|}{|c|}
=\frac{1}{|cd|}+\frac{|a|}{|c|}
\leq\frac{1}{|cd|}+\frac{|c|-2}{|c|}
\leq\frac{|c|-2+\frac{1}{|d|}}{|c|}
<\frac{|c|-1}{|c|}
\end{equation*}
with the last inequality following from $|d|\geq 3$. Also, we have
\begin{equation*}
\frac{|b|}{|d|}\geq\frac{2}{|d|}>\frac{1}{|d|-1}
\end{equation*}
since $|b|\geq 2$ and $|d|\geq 3$. Thus letting $r_1=1$, $r_2=|d|-1$, $r_3=|c|-1$, and $r_4=|c|$, we obtain the existence of four positive integers with the properties as stated in the theorem. Thus we may assume that $|c|=|a|+1$. By similar reasoning, if $|d|\geq |b|+2$, then we can see that $r_1=1$, $r_2=|d|-1$, $r_3=|d|-2$, and $r_4=|d|-1$ also satisfies the properties as stated in the theorem. Thus we may also assume that $|d|=|b|+1$.

We can deduce that $|a||c|-|b||d|=\pm 1$ from $|ac-bd|=1$. Thus we have
\begin{equation*}
\pm 1=|a|(|b|+1)-|b|(|a|+1)=|a|-|b|.
\end{equation*}
We know, however, that $|a|<|b|$. We therefore have that $|b|=|a|+1$ and so we must also have that $|b|=|c|$.
\end{proof}
\begin{lemma}\label{samesigns}
Consider a matrix $P$
\begin{equation*}
P=
\begin{bmatrix}
a & b\\
c & d
\end{bmatrix},
\end{equation*}
where $|\det P|=|ad-bc|=1$, $a,b,c,d\in\mathbb{Z}\backslash\{0\}$. We have $\frac{a}{c}>0$ if and only if $\frac{b}{d}>0$.
\end{lemma}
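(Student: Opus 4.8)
The plan is to exploit the determinant condition to pin down the sign of the product $ad$ relative to $bc$, and then to chase signs. First I would note that $\det P = \pm 1$ means $ad - bc = \pm 1$, so $|ad - bc| = 1$. Because $a,b,c,d$ are all nonzero integers, $|ad| \geq 1$ and $|bc| \geq 1$. If $ad$ and $bc$ had opposite signs, then $|ad - bc| = |ad| + |bc| \geq 2$, contradicting $|ad-bc|=1$. Hence $ad$ and $bc$ have the same sign (and both are nonzero).

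Next, writing $\mathrm{sgn}(x) \in \{-1,+1\}$ for the sign of a nonzero integer $x$, the previous step gives $\mathrm{sgn}(a)\,\mathrm{sgn}(d) = \mathrm{sgn}(b)\,\mathrm{sgn}(c)$. Multiplying both sides by $\mathrm{sgn}(c)\,\mathrm{sgn}(d)$ and using $\mathrm{sgn}(c)^2 = \mathrm{sgn}(d)^2 = 1$ yields $\mathrm{sgn}(a)\,\mathrm{sgn}(c) = \mathrm{sgn}(b)\,\mathrm{sgn}(d)$. Since $\frac{a}{c} > 0$ is equivalent to $\mathrm{sgn}(a)\,\mathrm{sgn}(c) = 1$ and $\frac{b}{d} > 0$ is equivalent to $\mathrm{sgn}(b)\,\mathrm{sgn}(d) = 1$, the two conditions are equivalent, which is exactly the assertion of the lemma.

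The only point that requires care is the use of the hypothesis that every entry of $P$ is nonzero: it is precisely what guarantees $|ad| \geq 1$ and $|bc| \geq 1$, and hence that a disagreement in sign between $ad$ and $bc$ would force $|\det P| \geq 2$. If some entry were zero this conclusion could fail (e.g.\ $a=0$ forces $\det P = -bc = \pm 1$ but says nothing comparing $a/c$ with $b/d$). Beyond that caveat the argument is a one-line computation with signs, so I do not expect any genuine obstacle; the whole proof should fit in a few lines.
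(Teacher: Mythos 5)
Your proof is correct and follows essentially the same route as the paper: both arguments show that $ad$ and $bc$ must share a sign because otherwise $|ad-bc|=|ad|+|bc|\geq 2$ would contradict $|\det P|=1$, and then conclude that $\frac{a}{c}>0$ iff $\frac{b}{d}>0$. Your sign-chasing with $\mathrm{sgn}$ is just a more explicit rendering of the paper's observation that it suffices to prove $\frac{ad}{bc}>0$.
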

\begin{proof}
It suffices to prove that $\frac{ad}{bc}>0$. Suppose for a contradiction that $\frac{ad}{bc}<0$. Then $ad$ and $bc$ have opposite signs. Also notice that $|ad|\geq 1$ and $|bc|\geq 1$. Then we have $|ad-bc|\geq 2$, a contradiction. The result follows.
\end{proof}
\begin{lemma}\label{power}
Consider a matrix $P$
\begin{equation*}
P=
\begin{bmatrix}
a & b\\
c & d
\end{bmatrix},
\end{equation*}
where $a,b,c,d\in\mathbb{Z}$, $|c|,|d|\geq 2$, $b\neq 0$, $\det(P)=\pm 1$, and
\begin{equation*}
0\leq\frac{|a|}{|b|},\frac{|c|}{|d|},\frac{|a|}{|c|},\frac{|b|}{|d|}\leq 1.
\end{equation*}
For all $i\in\mathbb{N}$, let
\begin{equation*}
P^i=
\begin{bmatrix}
a_i & b_i\\
c_i & d_i
\end{bmatrix}.
\end{equation*}
For all $i\geq 2$, we have $|d_i|-|b_i|\geq|d_{i-1}|-|b_{i-1}|$, $|d_i|-|c_i|\geq|d_{i-1}|-|c_{i-1}|$, $|b_i|-|a_i|\geq(|b|-|a|)(|b_{i-1}|-|a_{i-1}|)$, and $|d_i|>|d_{i-1}|$.
\end{lemma}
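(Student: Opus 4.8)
The plan is to prove all four inequalities by a single induction on $i$, carrying along the auxiliary assertion that each power $P^i$ again satisfies the standing hypotheses of the lemma: integer entries, $\det(P^i)=\pm1$, $b_i\neq0$, $|c_i|,|d_i|\geq2$, and the four balance inequalities $0\leq |a_i|/|b_i|,|c_i|/|d_i|,|a_i|/|c_i|,|b_i|/|d_i|\leq1$. This auxiliary assertion is what lets the argument be iterated.

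Before the induction I would record three consequences of the hypotheses on a single such matrix $P$. First, $a\neq0$ (otherwise $|\det P|=|bc|\geq2$), so all four entries are nonzero and Lemma \ref{samesigns} gives $\operatorname{sgn}(a)\operatorname{sgn}(c)=\operatorname{sgn}(b)\operatorname{sgn}(d)$, equivalently $\operatorname{sgn}(a)\operatorname{sgn}(b)\operatorname{sgn}(c)\operatorname{sgn}(d)=1$. Second, the determinant forces the strict inequalities $|a|<|c|<|d|$ and $|b|<|d|$: if, say, $|c|=|d|$ then $c=\pm d$ divides $\det P=\pm1$, forcing $|d|=1$, a contradiction, and similarly for the others. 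Third, since $ad$ and $bc$ have the same sign we get $\bigl| |a||d|-|b||c| \bigr| = |\det P| = 1$, and the identity $|d|\bigl(|d|+|a|-|b|-|c|\bigr) = (|d|-|b|)(|d|-|c|)\pm1 \geq 1-1 = 0$ then yields the key inequality $|d|-|c|\geq|b|-|a|$ (equivalently $|d|-|b|\geq|c|-|a|$). All of this applies to every $P^i$ once the auxiliary assertion is known for it.

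The heart of the induction is a sign dichotomy at the step $P^i=P^{i-1}P$. Expanding, each of the four entries of $P^i$ is a sum of two nonzero integer products; applying Lemma \ref{samesigns} to both $P^{i-1}$ and $P$, a short sign computation shows that for all four entries \emph{simultaneously} either both summands have the same sign (the ``additive regime'', which occurs precisely when $\operatorname{sgn}(c_{i-1})\operatorname{sgn}(d_{i-1})=\operatorname{sgn}(a)\operatorname{sgn}(c)$) or in every entry the two summands have opposite signs (the ``subtractive regime''). In the additive regime $|P^i|$ is the entrywise product $|P^{i-1}|\,|P|$. In the subtractive regime the balance inequalities $|a_{i-1}|\leq|b_{i-1}|\leq|d_{i-1}|$, $|a_{i-1}|\leq|c_{i-1}|\leq|d_{i-1}|$ for $P^{i-1}$ and their analogues for $P$ tell us which summand dominates, giving $|a_i|=|b_{i-1}||c|-|a_{i-1}||a|$, $|b_i|=|b_{i-1}||d|-|a_{i-1}||b|$, $|c_i|=|d_{i-1}||c|-|c_{i-1}||a|$, and $|d_i|=|d_{i-1}||d|-|c_{i-1}||b|$. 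In either regime I would then (a) check that $P^i$ inherits the auxiliary assertion --- nonzero entries and $|c_i|\geq2$ from these explicit formulas together with $|d|>|b|$, $|d|>|c|$, $|a|<|c|$; the balance inequalities from Lemma \ref{positive}; and $|d_i|\geq2$ since $|d_i|>|d_{i-1}|\geq2$ --- and (b) verify the four target inequalities by substituting the explicit expressions and using $|d|\geq2$, $|d|-|c|\geq1$, $|d|-|c|\geq|b|-|a|$ together with the corresponding facts for $P^{i-1}$. For instance, in the subtractive regime $|d_i|-|c_i| = |d_{i-1}|(|d|-|c|)-|c_{i-1}|(|b|-|a|) \geq (|d|-|c|)(|d_{i-1}|-|c_{i-1}|) \geq |d_{i-1}|-|c_{i-1}|$, and the other three target inequalities reduce similarly to $|d|-|c|\geq|b|-|a|$ or to $|d|\geq2$. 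The base case $i=2$ is exactly the instance $P^{i-1}=P$ of this computation.

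I expect the subtractive regime to be the main obstacle. One must correctly identify the dominant summand in each entry --- this is exactly where the full strength of the balance inequalities is needed --- and then show the cancellations do not spoil the monotonicity; this comes down to $|d|-|c|\geq|b|-|a|$ and its analogue for $P^{i-1}$, hence ultimately to the determinant identity recorded above, with the borderline configuration $|b|=|c|=|d|-1=|a|+1$ (the case isolated in Lemma \ref{ratio1}) needing a separate check. Confirming that $P^i$ cleanly inherits the balance inequalities, so that the machinery genuinely applies again at the next step, is the other delicate point.
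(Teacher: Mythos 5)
Your proposal is correct and takes essentially the same approach as the paper: split according to whether the two summands in each entry of $P^i = P^{i-1}P$ reinforce or cancel (the sign dichotomy), use the determinant condition together with integrality to obtain the key inequality $|d|-|b|\geq|c|-|a|$ (equivalently $|d|-|c|\geq|b|-|a|$), and substitute into the resulting explicit absolute-value formulas to get the four monotonicity statements. The only differences are cosmetic: you make explicit the inductive inheritance of the balance inequalities (via Lemma \ref{positive}) and derive the key inequality from a slightly different determinant identity, points the paper handles implicitly or with slightly different algebra.
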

\begin{proof}
We can deduce that either $|d_i|=|d||d_{i-1}|-|c||b_{i-1}|$ and $|b_i|=|b||d_{i-1}|-|a||b_{i-1}|$ or $|d_i|=|d||d_{i-1}|+|c||b_{i-1}|$ and $|b_i|=|b||d_{i-1}|+|a||b_{i-1}|$. In the first case, we have
\begin{align*}
|d_i|-|b_i|&=|d||d_{i-1}|-|c||b_{i-1}|-(|b||d_{i-1}|-|a||b_{i-1}|)\\
&=(|d|-|b|)|d_{i-1}|-(|c|-|a|)|b_{i-1}|.
\end{align*}
Since $\det(P)=\pm 1$, we can deduce that $|a||d|\geq |b||c|-1$. Thus, we have $|a||d|-|a||c|\geq |b||c|-|a||c|-1$. Since $\det(P)=\pm 1$, we have $\gcd(|a|,|c|)=1$ and so since $|c|\geq 2$ and $|c|\geq |a|$, we have $|c|>|a|$. Since $|a||d|-|a||c|\geq |b||c|\geq 0$, we have
\begin{equation*}
|d|-|c|\geq |b|-|a|-\frac{1}{|c|}>|b|-|a|-1.
\end{equation*}
It follows that $|d|-|b|\geq |c|-|a|$ since $a,b,c,d\in\mathbb{Z}$. Thus
\begin{equation*}
|d_i|-|b_i|\geq(|c|-|a|)(|d_{i-1}|-|b_{i-1}|)\geq |d_{i-1}|-|b_{i-1}|.
\end{equation*}
The other inequalities follows similarly.
\end{proof}
\begin{lemma}\label{limitsinfinity}
For all $m\in\mathbb{N}$ let $q_m$, $P_m$, $a_m$, $b_m$, $c_m$, and $d_m$ be defined as in Notation \ref{P_1P_2}. We have
\begin{equation*}
\lim_{m\rightarrow\infty}|a_m|=\lim_{m\rightarrow\infty}|b_m|=\lim_{m\rightarrow\infty}|c_m|=\lim_{m\rightarrow\infty}|d_m|=\infty.
\end{equation*}
\end{lemma}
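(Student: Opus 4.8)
The plan is to deduce all four limits from the single statement $\lim_{m\to\infty}|a_m|=\infty$. First, the hypotheses on $P_1,P_2$ propagate to every $P_m$: applying Lemma \ref{positive} (with $r_1=r_3=r_5=0$ and the remaining $r_i=1$), first to the powers $P_{m+1}^{q_m}$ and then to $P_{m+2}=P_{m+1}^{q_m}P_m$, shows by induction that each $P_m$ (and each $P_{m+1}^{q_m}$) again satisfies $|d_m|\ge 2$, $b_m,c_m\ne 0$, and \eqref{inequalitiesentries}. Hence $|a_m|\le|b_m|\le|d_m|$ and $|a_m|\le|c_m|\le|d_m|$, so $|a_m|\to\infty$ forces the rest; moreover, since the determinants are $\pm 1$, any two entries of $P_m$ (or of $P_{m+1}^{q_m}$) lying in a common row or column are coprime.

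As in the proof of Lemma \ref{positive}, writing $P_{m+1}^{q_m}=\left[\begin{smallmatrix}a'&b'\\c'&d'\end{smallmatrix}\right]$, the moduli of the entries of $P_{m+2}=P_{m+1}^{q_m}P_m$ are, for a single $\sigma\in\{\pm 1\}$,
\[ |a_{m+2}|=|b'|\,|c_m|+\sigma|a'|\,|a_m|,\quad |c_{m+2}|=|d'|\,|c_m|+\sigma|c'|\,|a_m|,\quad |d_{m+2}|=|d'|\,|d_m|+\sigma|c'|\,|b_m|. \]
Coprimality gives $|c'|\le|d'|-1$ and $|b_m|\le|d_m|-1$, so $|d_{m+2}|\ge|d'|+|d_m|-1\ge|d_{m+1}|+|d_m|-1$ (using Lemma \ref{power} for $|d'|\ge|d_{m+1}|$); since $|d_1|,|d_2|\ge 2$, induction gives $|d_m|\to\infty$. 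Similarly $|c_{m+2}|\ge(|d'|-|c'|)\,|c_m|\ge|c_m|$, with equality only when $|a_m|=|c_m|=1$, and $|a_{m+2}|\ge(|b'|-|a'|)\,|c_m|$.

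The crux is then the claim that $|a_m|=|c_m|=1$ for only finitely many $m$. Granting it, $|c_{m+2}|>|c_m|$ for large $m$, so $|c_m|\to\infty$; and for large $m$, either $|b'|>|a'|$ and $|a_{m+2}|\ge(|b'|-|a'|)|c_m|\ge|c_m|\to\infty$, or $|b'|=|a'|=1$, in which case coprimality with $|d'|\ge|d_{m+1}|$ forces $|c'|=|d'|-1\to\infty$, and substituting into the exact formulas yields $|c_{m+2}|-|a_{m+2}|\ge|d_{m+1}|-1$, so the step after any such one again brings $|a|$ up to at least $|d_{m+1}|-1$. Since the non-exceptional steps push $|a_m|$ up to $\ge|c_{m-2}|$ and the exceptional ones cannot occur twice consecutively within a parity class, a short argument then gives $|a_m|\to\infty$. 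The claim itself is proved by supposing $|a_m|=|c_m|=1$ for a same--parity pair $m,m+2$: the entry formulas force $P_{m+1}^{q_m}$ to be, up to a sign pattern, a ``consecutive integers'' matrix $\left[\begin{smallmatrix}n&n+1\\ n+1&n+2\end{smallmatrix}\right]$ with $n+2\ge|d_{m+1}|\to\infty$ -- precisely the exceptional shape of Lemma \ref{ratio1}; a fundamental-unit argument for the associated real quadratic order (the non-hyperbolic sign variants being handled directly) forces $q_m=1$ and $P_{m+1}$ to be this matrix, and a direct computation of $P_{m+1}(a_m,c_m)^{T}$ -- with the signs of $a_m,c_m$ pinned down from $\sigma=-1$ via Lemma \ref{samesigns} -- shows the first column of $P_{m+2}$ either has entries of large modulus or is $(\pm1,\pm1)^{T}$ with \emph{equal} signs, which forces $\sigma=+1$ at the next step; the only remaining possibility is an alternating regime, ruled out because $P_{m+1}=P_m^{q_{m-1}}P_{m-1}$ cannot have all of $P_{m-1},P_m,P_{m+1}$ of the required shapes.

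This finiteness claim is the step I expect to be the main obstacle: the bookkeeping of sign patterns via Lemma \ref{samesigns}, combined with squeezing the auxiliary matrices into the unique exceptional shape of Lemma \ref{ratio1} and eliminating it using $|d_m|\to\infty$, is delicate, though entirely elementary.
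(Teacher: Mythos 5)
Your reduction of everything to the claim that $|a_m|=|c_m|=1$ happens only finitely often is exactly where the proposal stops being a proof. Everything up to that point (propagating \eqref{inequalitiesentries} by Lemma \ref{positive}, the single-sign formulas for the entries of $P_{m+1}^{q_m}P_m$, and $|d_m|\to\infty$ from $|d_{m+2}|\ge|d'|+|d_m|-1$) is fine and close to the paper. But the ``crux'' is only sketched, and several of its steps are unverified or wrong as stated: the intermediate claim $|c_{m+2}|-|a_{m+2}|\ge|d_{m+1}|-1$ fails in the $\sigma=-1$ case, where the exact formulas give only $|c_{m+2}|-|a_{m+2}|=(|d'|-|b'|)|c_m|-(|c'|-|a'|)|a_m|\ge|c_m|$; the ``fundamental-unit argument for the associated real quadratic order'' forcing $q_m=1$ and pinning $P_{m+1}$ to a consecutive-integers matrix is not justified by anything in Notation \ref{P_1P_2} (the matrices $P_m$ need not be symmetric or tied to a quadratic order, and $q_m$ is an arbitrary prescribed sequence); and the concluding ``short argument'' that exceptional steps cannot recur within a parity class is precisely the finiteness claim you set out to prove. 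So the proof of $|a_m|\to\infty$, and hence of $|c_m|\to\infty$, is incomplete.

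The paper takes a different and much lighter route that avoids this case analysis: it interposes $|b_m|$ between $|d_m|$ and $|a_m|$. Having shown $|d_m|\to\infty$, it proves $|b_{m+2}|\ge|b_{m+1,q_m}|\ge|b_{m+1}|$, with equality forcing $|a_{m+1,q_m}|=|b_{m+1,q_m}|=1$ and $|d_m|=|b_m|+1$; thus if $|b_m|$ were bounded, $|d_m|$ would be bounded, a contradiction, so $|b_m|\to\infty$. Once $|b_m|\ge2$, coprimality from $\det=\pm1$ gives $|b_m|>|a_m|$, and the multiplicative inequality $|b_i|-|a_i|\ge(|b|-|a|)(|b_{i-1}|-|a_{i-1}|)$ of Lemma \ref{power} yields $|b_{m+1,q_m}|-|a_{m+1,q_m}|\ge1$, whence $|a_{m+2}|\ge|b_{m+1,q_m}||c_m|-|a_{m+1,q_m}||a_m|>|a_m|$, so $|a_m|\to\infty$, and $|c_m|\ge|a_m|$ finishes. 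If you want to rescue your outline, prove the $|b_m|$ statement first; the configuration $|a_m|=|c_m|=1$ then never needs to be excluded by a separate structural argument.
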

\begin{proof}
By Lemma \ref{positive}, we have for all $m\in\mathbb{N}$ that $\min\{|a_m|,|b_m|,|c_m|,|d_m|\}=|a_m|$, $\max\{|a_m|,|b_m|,|c_m|,|d_m|\}=|d_m|$, and $|d_m|\geq 2$. By induction for all $m\in\mathbb{N}$ we have $\det(P_m)=|a_md_m-b_mc_m|=\pm 1$. Thus for all $m\in\mathbb{N}$, we have $\gcd(|b_m|,|d_m|)=\gcd(|c_m|,|d_m|)=1$ so that for all $m\in\mathbb{N}$ $|c_m|<|d_m|$ and $|b_m|<|d_m|$. Also, for all $m\in\mathbb{N}$, let
\begin{equation*}
P_{m+1}^{q_m}=
\begin{bmatrix}
a_{m+1,q_m} & b_{m+1,q_m}\\
c_{m+1,q_m} & d_{m+1,q_m}
\end{bmatrix}.
\end{equation*}
From Lemma \ref{power}, we can deduce that  $|c_{m,q}|<|d_{m,q}|$ and $|b_{m,q}|<|d_{m,q}|$. Thus, for all $m\in\mathbb{N}$, we have by Lemma \ref{power} 
\begin{equation*}
|d_{m+2}|\geq |d_{m+1,q_m}||d_{m}|-|c_{m+1,q_m}||b_{m}|>|d_m|(|d_{m}|-|c_{m}|)\geq |d_m|.
\end{equation*}
It follows that $\lim_{m\rightarrow\infty}|d_m|=\infty$. Also, we have
\begin{equation*}
|b_{m+2}|\geq |b_{m+1,q_m}||d_{m}|-|a_{m+1,q_m}||b_{m}|\geq|b_{m+1,q_m}|(|d_{m}|-|b_{m}|)\geq |b_{m+1,q_m}|.
\end{equation*}
By similar reasoning, we have $|b_{m+1,q_m}|\geq |b_{m+1}|$ and so for all $m\in\mathbb{N}$ we have $|b_{m+2}|\geq |b_{m+1}|$ with equality only if $|a_{m+1,q_m}|=|b_{m+1,q_m}|=1$ and $|d_{m}|=|b_{m}|+1$. If $|b_m|$ is bounded for all $m\in\mathbb{N}$, then for sufficiently large $m$, we have $|d_{m}|=|b_{m}|+1$ and so $|d_m|$ is bounded, a contradiction. Thus $|b_m|$ isn't bounded and we have $\lim_{m\rightarrow\infty}|b_m|=\infty$.

Finally, choose $M\in\mathbb{N}$ such that for all $m\geq M$ $|b_m|\geq 2$. Then for all $m\geq M$, we have $|b_m|>|a_m|$. By Lemma \ref{power}, we have $|b_{m+1,q_m}|-|a_{m+1,q_m}|\geq(|b_{m+1}|-|a_{m+1}|)^{q_m}\geq 1$. So for all $m\geq M$, we have
\begin{align*}
|a_{m+2}|&=|a_{m+1,q_m}a_{m}+b_{m+1,q_m}c_{m}|\\
&\geq |b_{m+1,q_m}||c_{m}|-|a_{m+1,q_m}||a_{m}|\\
&>(|b_{m+1,q_m}|-|a_{m+1,q_m}|)|a_{m}|\\
&\geq |a_m|.
\end{align*}
Thus $\lim_{m\rightarrow\infty}|a_m|=\infty$. Since $|a_m|<|c_m|$ for all $m\in\mathbb{N}$, we also have $\lim_{m\rightarrow\infty}|c_m|=\infty$.
\end{proof}
Letting $a_m$, $b_m$, $c_m$, and $d_m$ be as defined in Notation \ref{P_1P_2}, Lemma \ref{limitsinfinity} implies that for sufficiently large $m$, we have $|d_m|>|b_m|>|a_m|>0$.
\begin{remark}
In Notation \ref{P_1P_2}, since $P_3=(P_2)^{q_1}P_1$, $P_2$ is $P_3$ truncated after a certain point. Thus, by reindexing the matrices $P_i,{i\in\mathbb{N}}$, we will assume for the rest of the paper that $P_1$ is $P_2$ truncated after a certain point.
\end{remark}
\begin{lemma}\label{Q_nproductP_n}
Let $q_m$, $P_m$, and $Q_n$ be as defined in Notation \ref{P_1P_2}. Then there exists uniquely $2\leq m_1<m_2<...<m_l$ and $n_1,\ldots,n_l\in\mathbb{N}$ with the following properties:
\begin{enumerate}
\item for all $1\leq i\leq l$, we have $n_i\leq q_{m_i-1}$,
\item for all $2\leq i\leq l$, if $n_i=q_{m_i-1}$, then $m_{i-1}+2\leq m_i$,
\item $Q_n=(P_{m_l})^{n_l}(P_{m_{l-1}})^{n_{l-1}}...(P_{m_1})^{n_1}M_n$ where the matrix $M_n$ is a product of the string of $A_1$s, $A_2$s, $\ldots$, and $A_v$s in $P_2$ truncated after a certain point unless $m_1=2$ and $n_1=q_1$ in which case $M_n$ is a product of the string of $A_1$s, $A_2$s, $\ldots$, and $A_v$s in $P_1$ truncated after a certain point.
\end{enumerate}
\end{lemma}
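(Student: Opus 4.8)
The plan is to read the decomposition off the word structure of the $P_m$ and then reinterpret it as a matrix identity; conditions (1) and (2) turn out to be exactly the digit constraints of the Ostrowski-type numeration attached to the continuants $k_m$ (the length of $P_m$), which satisfy $k_{m+2}=q_{m-2}k_{m-1}+k_{m-2}$. First I would record the bookkeeping: since each $q_m\ge 1$ we have $k_{m+2}\ge k_{m+1}+k_m$, so $k_m\to\infty$; and $P_{m+1}$ is a prefix (as a word in the letters $A_1,\dots,A_v$) of $P_{m+2}=P_{m+1}^{q_m}P_m$, while $P_1$ is a prefix of $P_2$ as arranged in the preceding note. Hence the $P_m$ form an increasing chain of prefixes of one word, $Q_n$ is its length-$n$ prefix, and $Q_n$ is a prefix of $P_m$ whenever $k_m\ge n$, consistent with Notation~\ref{P_1P_2}.

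Next I would establish the key structural dichotomy: for $m\ge 1$ and $0\le n\le k_{m+2}$, the length-$n$ prefix of $P_{m+2}$ is either (a) $P_{m+1}^{\,j}\,(\text{length-}r\text{ prefix of }P_{m+1})$ with $j=\lfloor n/k_{m+1}\rfloor\le q_m$, $r=n-jk_{m+1}<k_{m+1}$, and $j=q_m$ only when $r=0$ --- this is the range $n\le q_mk_{m+1}$; or (b) $P_{m+1}^{q_m}\,(\text{length-}r\text{ prefix of }P_m)$ with $r=n-q_mk_{m+1}$ and $1\le r\le k_m$ --- the range $n>q_mk_{m+1}$. Both are immediate from $P_{m+2}=P_{m+1}^{q_m}P_m$, and since the matrices $A_i$ multiply in word order, each gives a matrix identity $Q_n=P_{m+1}^{\,j}Q_r$ respectively $Q_n=P_{m+1}^{q_m}Q_r$.

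I would then prove by strong induction on $M\ge 2$ the statement: for every $n$ with $0\le n\le k_M$, the prefix $Q_n$ of $P_M$ admits a decomposition as in the lemma in which moreover every index satisfies $m_i\le M-1$. For $M=2$ take $l=0$ and $M_n:=Q_n$, a prefix of $P_2$. For $M\ge 3$ apply the dichotomy to $Q_n$, a prefix of $P_M=P_{M-1}^{q_{M-2}}P_{M-2}$. In case (a) we get $Q_n=P_{M-1}^{\,j}Q_r$ with $0\le j\le q_{M-2}$, $r<k_{M-1}$, and $j=q_{M-2}$ only if $r=0$; apply the induction hypothesis to $Q_r$ (a prefix of $P_{M-1}$, indices $\le M-2$) and, if $j\ge 1$, prepend $m_{\text{new}}=M-1$, $n_{\text{new}}=j$. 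In case (b) we get $Q_n=P_{M-1}^{q_{M-2}}Q_r$ with $Q_r$ a prefix of $P_{M-2}$ (when $M=3$ this is a prefix of $P_1\subseteq P_2$ and contributes no index); apply the induction hypothesis to $Q_r$, so its indices are $\le M-3$, and prepend $m_{\text{new}}=M-1$, $n_{\text{new}}=q_{M-2}$. In both cases the new list has indices $\le M-1$.

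Finally I would verify the three conditions, which is where all the care lies. Condition (1), $n_i\le q_{m_i-1}$, is built into the construction. The strict ordering $m_1<\dots<m_l$ holds because the induction hypothesis bounds all earlier indices by $M-2$ in case (a) and by $M-3$ in case (b), both strictly below $m_{\text{new}}=M-1$. For condition (2): the event $n_{\text{new}}=q_{m_{\text{new}}-1}$ occurs only in case (a) with $r=0$ --- but then $Q_r$ is empty and contributes no index, so no preceding $m_i$ exists --- or in case (b), where the induction hypothesis forces any preceding index to be $\le M-3=m_{\text{new}}-2$. Since $k_M\to\infty$, every $n$ is covered by some $M\ge 2$, and the lemma follows. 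The main obstacle is purely the boundary bookkeeping: the roles of $r=0$, of the possibility $l=0$, and of the ``$j=q_{M-2}$ only if $r=0$'' clause, which are exactly what cases (a) and (b) are engineered to control; the rest is routine.
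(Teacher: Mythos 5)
Your proposal is correct and follows essentially the same route as the paper: both peel powers of $P_{M-1}$ off the length-$n$ prefix using $P_M=P_{M-1}^{q_{M-2}}P_{M-2}$, recurse on the leftover prefix, and secure condition (2) by noting that the full power $q_{M-2}$ can only occur when the remainder drops down to a prefix of $P_{M-2}$ (or is empty). The only difference is cosmetic: you induct on the level $M$ while the paper runs strong induction on $n$, choosing $m$ with $k_{m+1}>n\geq k_m$; your write-up just spells out the boundary bookkeeping that the paper leaves terse.
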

\begin{proof}
We prove by strong induction on $n\in\mathbb{N}$. If $n<k_2$, then we have $Q_n=M_n$. Suppose $n\geq k_2$ and that the lemma holds true for all values less than $n$. Choose $m\in\mathbb{N}$ such that $k_{m+1}>n\geq k_m$ where $m\geq 2$. Then $P_m$ is $Q_n$ truncated after a certain point and $Q_n$ is $P_{m+1}$ truncated after a certain point. Since $P_{m+1}=(P_m)^{q_{m-1}}P_{m-1}$, it follows that $Q_n=(P_m)^iR$ where $i$ and $R$ satisfy the following:
\begin{enumerate}
\item $i\leq q_{m-1}$
\item $R$ is a product of the string of $A_1$s, $A_2$s, $\ldots$, and $A_v$s in $P_m$ truncated after a certain point
\item if $i=q_{m-1}$ and $m\geq 3$, then $R$ is a product of the string of $A_1$s, $A_2$s, $\ldots$, and $A_v$s in $P_{m-1}$ truncated after a certain point.
\end{enumerate}
Notice that if we replace $m$ by any other positive integer, say $m'$ and have $Q_n=\left(P_{m'}\right)^iR$ instead, then it follows that $m'<m$. But then if $i\leq q_{m-1}$, then we have that $P_{m'}$ is $R$ truncated after a certain point. Thus any expression for $Q_n$ in (3) in the lemma must begin with $m_l=m$. Similarly the value of $i$ satisfying the above must be unique.

If $m=2$ and $i<q_1$, then $R$ is $P_2$ truncated after a certain point and $R=M_n$ and the result follows. If $m=2$ and $i=q_1$, then $R$ is $P_1$ truncated after a certain point and $R=M_n$ and again the result follows. So assume that $m>2$. Then $R=Q_{n-ik_m}$ and so $Q_n=P_m^iQ_{n-ik_m}$ with $n-ik_m<k_m$ and if $i=q_{n-1}$, then $n-ik_m<k_{m-1}$. By induction, the result follows.
\end{proof}
We divide into two cases. Case $1$ assumes that for sufficiently large $m$, we have $|b_m|=|c_m|=|d_m|-1=|a_m|+1$. Case $2$ deals with all other cases.
\section{The Linear Growth Case}\label{sec3}
For this case, we may assume without loss of generality that $|b_m|=|c_m|=|d_m|-1=|a_m|+1$ for all $m\in\mathbb{N}$. By Lemma \ref{limitsinfinity}, we may also assume without loss of generality that $|a_m|\geq 1$ for all $m\in\mathbb{N}$.
\begin{lemma}\label{nklinear}
Let $P_m$ and $Q_n$ be as defined in Notation \ref{P_1P_2}. Let $n_1,n_2,\ldots,n_j,\ldots,$ be the list of natural numbers such that for each $n_j$, there exists $2\leq m_1<m_2<...<m_l$ and $n_{j,1},\ldots,n_{j,l}\in\mathbb{N}$ with the following properties:
\begin{enumerate}
\item for all $1\leq i\leq l$, we have $n_{j,i}\leq q_{m_i-1}$
\item for all $2\leq i\leq l$, if $n_{j,i}=q_{m_i-1}$, then $m_{i-1}+2\leq m_i$
\item $Q_{n_j}=(P_{m_l})^{n_{j,l}}(P_{m_{l-1}})^{n_{j,l-1}}...(P_{m_1})^{n_{j,1}}$.
\end{enumerate}
We have
\begin{equation*}
|g_{n_j}|\leq n_j\max\{|c_1|,|c_2|\}
\end{equation*}
for all $j\in\mathbb{N}$.
\end{lemma}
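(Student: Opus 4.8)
The plan is to exploit the very rigid shape that the standing assumption of this section ($|b_m|=|c_m|=|d_m|-1=|a_m|+1$) forces on each $P_m$: I want to show that every $P_m$ equals $\varepsilon_m I+c_m E$ for a single fixed rank-one nilpotent matrix $E$ and a sign $\varepsilon_m\in\{\pm1\}$, and then the bound for $Q_{n_k}=P_{m_l}^{n_l}\cdots P_{m_1}^{n_1}$ falls out by telescoping.

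First I would pin down the shape of a single $P_m$. Writing $t=|c_m|$, the entries have absolute values $t-1,t,t,t+1$, so $|a_md_m|-|b_mc_m|=-1$; since $|\det P_m|=1$, this forces $a_md_m$ and $b_mc_m$ to have the \emph{same} sign, and a short sign analysis (cf.\ Lemma~\ref{samesigns}) then shows that either $\det P_m=1$ with $a_m,d_m$ (and $b_m,c_m$) of opposite signs, or $\det P_m=-1$ and $P_m$ is symmetric with eigenvectors of irrational slope. In the first ("good") case $\operatorname{tr}P_m=\pm2$, so $P_m$ has the repeated eigenvalue $\varepsilon_m:=\operatorname{sgn}d_m=-\operatorname{sgn}a_m$, the matrix $N_m:=P_m-\varepsilon_m I$ has every entry of absolute value $t$ and satisfies $N_m^2=0$, and $\ker N_m$ (the eigenline of $P_m$) is spanned by $(1,1)^T$ or by $(1,-1)^T$.

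The key step is to show that \emph{every} $P_m$ is of good type and that they \emph{all} have the same eigenline $v_*\in\{(1,1)^T,(1,-1)^T\}$. Propagation is easy: if $P_{m+1}$ and $P_m$ both fix $v_*$, then $P_{m+2}v_*=P_{m+1}^{q_m}P_m v_*=\varepsilon_m\varepsilon_{m+1}^{q_m}v_*$, so the linear-structured matrix $P_{m+2}$ has eigenline $v_*$ as well (and is therefore automatically good type). Hence it suffices to treat $P_1,P_2$, and here one argues by contradiction: if $P_2$ were of bad type, or if $P_1$ and $P_2$ had different eigenlines, then a short computation with $P_2^{q_1}$ (which is a $2\times2$ Jordan block, or has an eigenvalue of size $\approx t$) shows that $P_3=P_2^{q_1}P_1$, and then $P_4=P_3^{q_2}P_2$, acquires a trace or an off-diagonal part of the wrong order of magnitude (growing with $q_1,q_2$ and the $|c_i|$), contradicting that \emph{all} the $P_m$ are linear-structured. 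After possibly conjugating by $\operatorname{diag}(1,-1)$ — which alters none of the hypotheses and leaves every $|g_n|$ unchanged — we may take $v_*=(1,1)^T$, so that $N_m=c_m E$ with the fixed matrix $E=\left[\begin{smallmatrix}1&-1\\1&-1\end{smallmatrix}\right]$ satisfying $E^2=0$ and $E_{21}=1$, and $P_m=\varepsilon_m I+c_m E$ with $\varepsilon_m=-\operatorname{sgn}c_m$.

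Finally I would run the telescoping. Since $E^2=0$ we have $P_m^{i}=\varepsilon_m^{i}I+i\varepsilon_m^{i-1}c_m E$, and a product of matrices $\alpha_i I+\beta_i E$ equals $(\prod_i\alpha_i)I+(\sum_i\beta_i\prod_{j\neq i}\alpha_j)E$; applied to $Q_{n_k}=P_{m_l}^{n_l}\cdots P_{m_1}^{n_1}$ this gives $Q_{n_k}=\varepsilon I+\varepsilon\big(\sum_{i}n_i\varepsilon_{m_i}c_{m_i}\big)E$ with $\varepsilon=\prod_i\varepsilon_{m_i}^{n_i}$, hence $|g_{n_k}|=\big|\sum_i n_i\varepsilon_{m_i}c_{m_i}\big|\le\sum_i n_i|c_{m_i}|$. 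The same decomposition applied to $P_{m+2}=P_{m+1}^{q_m}P_m$ yields $c_{m+2}=\varepsilon_{m+1}^{q_m}c_m+q_m\varepsilon_{m+1}^{q_m-1}\varepsilon_m c_{m+1}$, so $|c_{m+2}|\le|c_m|+q_m|c_{m+1}|$ — the recursion also satisfied by the word lengths $k_{m+2}=k_m+q_m k_{m+1}$ — and together with $|c_1|\le k_1\max\{|c_1|,|c_2|\}$ and $|c_2|\le k_2\max\{|c_1|,|c_2|\}$ (valid since $k_1,k_2\ge1$) an induction gives $|c_m|\le k_m\max\{|c_1|,|c_2|\}$ for all $m$. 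Substituting, $|g_{n_k}|\le\sum_i n_i k_{m_i}\max\{|c_1|,|c_2|\}=n_k\max\{|c_1|,|c_2|\}$, since $n_k=\sum_i n_i k_{m_i}$ is exactly the length of the product $Q_{n_k}$. The main obstacle is the middle step: essentially all the content is in showing that imposing the linear-structured property on \emph{every} $P_m$ rules out the bad-type matrices (whose powers grow exponentially and would immediately destroy the bound) and forces a common eigenline; the rest is bookkeeping.
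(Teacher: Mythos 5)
Your endgame (writing each $P_m$ as $\varepsilon_m I+c_mE$ with $E^2=0$, telescoping the product, and bounding $|c_m|\le k_m\max\{|c_1|,|c_2|\}$ by the recursion $|c_{m+2}|\le q_m|c_{m+1}|+|c_m|$, $k_{m+2}=q_mk_{m+1}+k_m$) is correct once the structural claim is granted, and the dichotomy for a single Case-1 matrix (parabolic of trace $\pm2$, or symmetric hyperbolic with $\det=-1$ and trace $\pm2t$) is also right, as is the propagation step. The genuine gap is exactly the step you yourself flag as carrying ``essentially all the content'': ruling out the bad type and mismatched eigenlines for the seed matrices. The mechanism you propose --- that $P_3=P_2^{q_1}P_1$ (or $P_4$) would acquire ``a trace or an off-diagonal part of the wrong order of magnitude'' --- does not work as stated. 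If $P_2$ were of bad type, then $P_2^{q_1}$ is projectively close to the rank-one projector onto its top eigenvector, and for a Case-1 matrix such as $\left[\begin{smallmatrix}t-1&t\\ t&t+1\end{smallmatrix}\right]$ that eigenvector is nearly $(1,1)^T$, so $P_2^{q_1}$, and hence $P_3$, has all entry ratios tending to $1$ --- which is precisely the leading-order shape Case 1 prescribes. So no contradiction is visible from orders of magnitude of traces or entries; a contradiction can only come from the exact integer identities of the Case-1 shape ($|b_3|=|c_3|$, $|d_3|-|a_3|=2$, $\det=\pm1$), and your sketch never engages with them. Similarly, the mismatched-eigenline case reduces (after expanding $P_{m+1}^{q_m}P_m$ in the two nilpotents) to excluding an exact identity of the form $q_m|c_{m+1}|=|c_m|$; that exclusion is easy but must be argued, not inferred from growth.

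For comparison, the paper avoids classifying the $P_m$ altogether: it applies Lemma \ref{positive} together with Lemma \ref{ratio1} directly to the partial products $Q_{n_k}$ --- any nontrivial rational bounds on the ratios of the entries of $Q_{n_k}$ would propagate to $P_m$ for all large $m$, contradicting $|a_m|/|c_m|,|b_m|/|d_m|\rightarrow1$ in Case 1 --- so each $Q_{n_k}$ itself satisfies $|f_{n_k}|=|g_{n_k}|=|h_{n_k}|-1=|e_{n_k}|+1$, and then a one-step induction (the ``$+$'' sign pattern being impossible) gives $|g_{n_{k+1}}|=|g_{n_k}|+|c_i|$ with $i\in\{1,2\}$, which yields the bound immediately. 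Your approach, if the parabolic-structure step were actually proved, would give a sharper description of the $Q_{n_k}$ (explicit form $\pm I+(\sum_i n_i\varepsilon_{m_i}c_{m_i})E$), but as written that step is a sketch whose stated justification would fail.
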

\begin{proof}
We prove by induction on $k\in\mathbb{N}$. First we observe that Lemma \ref{positive} implies that $|e_{n_k}|\leq|f_{n_k}|\leq |h_{n_k}|$ and $|e_{n_k}|\leq|g_{n_k}|\leq |h_{n_k}|$ for all $k\in\mathbb{N}$. Let $k\in\mathbb{N}$. If there exists $r_1,r_2,r_3,r_4\in\mathbb{N}$ with $r_3<r_4$ such that
\begin{equation*}
\frac{r_1}{r_2}\leq\frac{|e_{n_k}|}{|g_{n_k}|},\frac{|f_{n_k}|}{|h_{n_k}|}\leq\frac{r_3}{r_4}
\end{equation*}
and $|h_{n_k}|>r_2,r_4$, then we can use Lemma \ref{positive} to deduce that for all sufficiently large $m\in\mathbb{N}$, we have
\begin{equation*}
\frac{r_1}{r_2}\leq\frac{|a_m|}{|c_m|},\frac{|b_m|}{|d_m|}\leq\frac{r_3}{r_4}.
\end{equation*}
But this cannot be because
\begin{equation*}
\lim_{m\rightarrow\infty}\frac{|a_m|}{|c_m|}=\lim_{m\rightarrow\infty}\frac{|b_m|}{|d_m|}=1.
\end{equation*}
We can therefore see with Lemma \ref{ratio1} that for all $k\in\mathbb{N}$, we have $|f_{k_m}|=|g_{k_m}|=|h_{k_m}|-1=|e_{k_m}|+1$. Suppose the desired inequality holds for $k$. We will prove it also holds for $k+1$. Notice that Lemma \ref{Q_nproductP_n} implies that for all $n\in\mathbb{N}$ $Q_n$ is a product of the matrices $P_1$ and $P_2$ if and only if $n$ is in the sequence $(n_j)_j$. It follows that $Q_{n_{k+1}}=Q_{n_k}P_1$ or  $Q_{n_{k+1}}=Q_{n_k}P_2$. Then we have that either $|g_{n_{k+1}}|=|h_{n_k}||c_i|+|g_{n_k}||a_i|$ or $|g_{n_{k+1}}|=|h_{n_k}||c_i|-|g_{n_k}||a_i|$ where $i=1$ or $2$. Suppose the first equality holds. Then we also have $|e_{n_{k+1}}|=|e_{n_k}||a_i|+|f_{n_k}||c_i|$ so that
\begin{align*}
|g_{n_{k+1}}|-|e_{n_{k+1}}|&=|h_{n_k}||c_i|+|g_{n_k}||a_i|-|e_{n_k}||a_i|-|f_{n_k}||c_i|\\
&=(|h_{n_k}|-|f_{n_k}|)|c_i|+(|g_{n_k}|-|e_{n_k}|)|a_i|\\
&=|c_i|+|a_i|\\
&>1,
\end{align*}
a contradiction. Thus we have
\begin{align*}
 |g_{n_{k+1}}|&=|h_{n_k}||c_i|-|g_{n_k}||a_i|\\
&=(|g_{n_k}|+1)|c_i|-|g_{n_k}||c_i|+|g_{n_k}|\\
&=|g_{n_k}|+|c_i|\\
&\leq n_k\max\{|c_1|,|c_2|\}+|c_i|\\
&\leq(n_k+1)\max\{|c_1|,|c_2|\}\\
&\leq(n_{k+1})\max\{|c_1|,|c_2|\}.
\end{align*}
\end{proof}
\begin{proposition}\label{entrieslimit1}
Let $e_n$, $f_n$, $g_n$, and $h_n$ be as defined in Notation \ref{P_1P_2}. Then there exists $D>0$ such that for all $n\in\mathbb{N}$, we have
\begin{equation*}
|e_n|,|f_n|,|g_n|,|h_n|<Dn.
\end{equation*}
\end{proposition}
\begin{proof}
Consider all product matrices constructed as the string of $A$s and $B$s in $P_1$ truncated after a certain point and in $P_2$ truncated after a certain point. Let $M$ be the largest entry in absolute value of all such matrices. Let $n\in\mathbb{N}$. By Lemma \ref{Q_nproductP_n}, we have there exists $2\leq m_1<m_2<...<m_l$ and $
n_1,\ldots,n_l\in\mathbb{N}$ with the following properties:
\begin{enumerate}
\item for all $1\leq i\leq l$, we have $n_i\leq q_{m_i-1}$
\item for all $2\leq i\leq l$, if $n_i=q_{m_i-1}$, then $m_{i-1}+2\leq m_i$.
\item  $Q_n=(P_{m_l})^{n_l}(P_{m_{l-1}})^{n_{l-1}}...(P_{m_1})^{n_1}M_n$ where the matrix $M_n$ is a product of the string of $A_1$s, $A_2$s, $\ldots$, and $A_v$s in either $P_1$ or $P_2$ truncated after a certain point.
\end{enumerate}

Let
\begin{equation*}
Q_{n'}=(P_{m_l})^{n_l}(P_{m_{l-1}})^{n_{l-1}}...(P_{m_1})^{n_1}
\end{equation*}
so that
\begin{equation*}
Q_n=Q_{n'}M_n.
\end{equation*}
Note that $n'<n+k_2$. By Lemma \ref{nklinear}, we have $|g_{n'}|\leq n'\max\{|c_1|,|c_2|\}$. Let $C=\max\{|c_1|,|c_2|\}$. Then we have
\begin{align*}
\max\{|e_n|,|f_n|,|g_n|,|h_n|\}& \leq 2M(|g_{n'}|+1)\leq 2M(Cn'+1)<2M(C(n+k_2)+1)\\ & \leq(2MC+2MCk_2+2M)n.
\end{align*}
Since none of $M,C$, or $k_2$ depend on $n$, letting $D=2MC+2MCk_2+2M$, we obtain our result.
\end{proof}
\begin{proof}[Proof of Theorem \ref{bigthm} for Case $1$]
From Proposition \ref{entrieslimit1}, we get there exists $C>0$ such that for all $n\in\mathbb{N}$
\begin{equation*}
|G_n|<Cn.
\end{equation*}
\end{proof}
\section{The Exponential Growth Case}\label{sec4}
Case $2$ covers all other cases. By Lemma \ref{ratio1}, there exists $m\in\mathbb{N}$, $m\geq 2$, with the following properties. $a_m,b_m,c_m,d_m$ are all nonzero and there exists positive integers $r_1,r_2,r_3,r_4,r_5,r_6,r_7,r_8$ such that 
\begin{equation*}
\frac{r_1}{r_2}\leq\frac{|a_m|}{|c_m|},\frac{|b_m|}{|d_m|}\leq\frac{r_3}{r_4},
%\end{equation*} 
\text{\ \ \ and\ \ \ }
%\begin{equation*}
\frac{r_5}{r_6}\leq\frac{|a_m|}{|b_m|},\frac{|c_m|}{|d_m|}\leq\frac{r_7}{r_8},
\end{equation*} 
with $r_1<r_2$, $r_3<r_4$, $r_5<r_6$, $r_7<r_8$, and $|d_m|>r_2,r_4,r_6,r_8$. Without loss of generality, we may assume that the inequalities hold for $m=2$. Also, by taking the minimum of $\frac{r_1}{r_2}$ and $\frac{r_5}{r_6}$ and the maximum of $\frac{r_3}{r_4}$ and $\frac{r_7}{r_8}$, we can say there exists positive integers $r_1,r_2,r_3,r_4$ such that
\begin{equation*}
\frac{r_1}{r_2}\leq\frac{|a_2|}{|c_2|},\frac{|b_2|}{|d_2|},\frac{|a_2|}{|b_2|},\frac{|c_2|}{|d_2|}\leq\frac{r_3}{r_4}
\end{equation*} 
with $r_1<r_2$, $r_3<r_4$, and $|d_2|>r_2,r_4$.
\begin{lemma}\label{ratios1}
Let $a_m$, $b_m$, $c_m$, and $d_m$ be as defined in Notation \ref{P_1P_2}. For all $m\geq 2$, we have
\begin{equation*}
\frac{r_1}{r_2}\leq\frac{|a_m|}{|c_m|},\frac{|b_m|}{|d_m|}\leq\frac{r_3}{r_4}
\end{equation*}
and $|d_m|>r_2,r_4$.
\end{lemma}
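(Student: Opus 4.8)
The plan is to prove the lemma by induction on $m$, after first strengthening it to a statement about arbitrary powers of the $P_m$; this is what lets the two available tools be chained: Lemma \ref{positive}, which transports a pair of rational bounds on the entry-ratios through a matrix product using only the \emph{left} factor's ratios, and Lemma \ref{power}, which controls the entries of $P_m^i$.

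The auxiliary claim I would establish is: for every $m\ge 2$ for which the assertion of Lemma \ref{ratios1} is known, and every $i\ge 1$, the entries $a_{m,i},b_{m,i},c_{m,i},d_{m,i}$ of $P_m^{i}$ satisfy $\frac{r_1}{r_2}\le\frac{|a_{m,i}|}{|c_{m,i}|},\frac{|b_{m,i}|}{|d_{m,i}|}\le\frac{r_3}{r_4}$ and $|d_{m,i}|>r_2,r_4$. This I would prove by a side induction on $i$, the case $i=1$ being the assertion for that $m$. For the step, write $P_m^{i+1}=P_m^{i}\cdot P_m$ and apply Lemma \ref{positive} with left factor $P_m^{i}$ and right factor $P_m$: the orderings $|d_{m,i}|\ge|c_{m,i}|$ and $|b_{m,i}|\ge|a_{m,i}|$ needed of the left factor follow from Lemma \ref{power} (which gives $|d_{m,i}|-|c_{m,i}|\ge|d_m|-|c_m|\ge 0$, similarly with $b$, and $|b_{m,i}|\ge|a_{m,i}|$); the orderings $|d_m|\ge|b_m|$, $|c_m|\ge|a_m|$ needed of the right factor come from the general ordering of the entries of $P_m$ already established (minimum $|a_m|$, maximum $|d_m|$); the entries that must be nonzero are so because $|d_{m,i}|\ge|d_m|\ge 2$ forces $c_{m,i},b_{m,i}\ne 0$ via $\det P_m^{i}=\pm 1$; and the side condition $|d_{m,i}|>r_2,r_4$ holds since $|d_{m,i}|\ge|d_m|>r_2,r_4$ by Lemma \ref{power} and the case $i=1$. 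Lemma \ref{positive} then delivers the ratio bounds for $P_m^{i+1}$, and $|d_{m,i+1}|>|d_{m,i}|>r_2,r_4$ again by Lemma \ref{power}.

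Granting the auxiliary claim, the lemma would follow by induction on $m\ge 2$: the base case $m=2$ is precisely the hypothesis set up in the paragraph preceding the lemma. For the step, assume the bounds for $P_{m+1}$ (some $m\ge 1$) and prove them for $P_{m+2}=P_{m+1}^{q_m}P_m$. By the auxiliary claim (with $m+1$ in place of $m$, $i=q_m$) the left factor $P_{m+1}^{q_m}$ satisfies $\frac{r_1}{r_2}\le\frac{|a_{m+1,q_m}|}{|c_{m+1,q_m}|},\frac{|b_{m+1,q_m}|}{|d_{m+1,q_m}|}\le\frac{r_3}{r_4}$ and $|d_{m+1,q_m}|>r_2,r_4$. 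Applying Lemma \ref{positive} to $P_{m+1}^{q_m}\cdot P_m$ — orderings and nonvanishing for $P_{m+1}^{q_m}$ exactly as above, and for $P_m$ from the general ordering when $m\ge 2$ and directly from the inequalities of Notation \ref{P_1P_2} when $m=1$ — yields $\frac{r_1}{r_2}\le\frac{|a_{m+2}|}{|c_{m+2}|},\frac{|b_{m+2}|}{|d_{m+2}|}\le\frac{r_3}{r_4}$. It remains to see $|d_{m+2}|>r_2,r_4$; since $d_{m+2}=c_{m+1,q_m}b_m+d_{m+1,q_m}d_m$, one gets $|d_{m+2}|\ge|d_{m+1,q_m}||d_m|-|c_{m+1,q_m}||b_m|\ge|d_{m+1,q_m}|(|d_m|-|b_m|)\ge|d_{m+1,q_m}|\ge|d_{m+1}|>r_2,r_4$, using $|c_{m+1,q_m}|\le|d_{m+1,q_m}|$, the fact that $|d_m|-|b_m|\ge 1$ for every $m\ge 1$ (from $\gcd(|b_m|,|d_m|)=1$, $|b_m|\le|d_m|$, $|d_m|\ge 2$), and Lemma \ref{power}. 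This closes the induction.

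The feature that makes everything fit is that Lemma \ref{positive} only ever needs ratio bounds on the \emph{left} factor, which in $P_{m+2}=P_{m+1}^{q_m}P_m$ is a power of $P_{m+1}$ with $m+1\ge 2$; so the bounds for $P_1$ are never needed and the base case $m=2$ suffices, which is exactly why the lemma is stated for $m\ge 2$. I expect the only real work — the main (if shallow) obstacle — to be the uniform verification of the ordering, nonvanishing, and "$|d|>r_2,r_4$" hypotheses of Lemma \ref{positive} across all powers $P_m^{i}$, which is precisely what Lemma \ref{power} is designed to supply; the argument is routine once that bookkeeping is arranged cleanly.
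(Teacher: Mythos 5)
Your proposal is correct and takes essentially the same route as the paper: induction on $m$, applying Lemma \ref{positive} to the decomposition $P_{m+2}=P_{m+1}^{q_m}P_m$ with Lemma \ref{power} supplying the control on the power factor, and a direct estimate yielding $|d_{m+2}|>r_2,r_4$. Your explicit auxiliary claim that the ratio bounds persist for all powers $P_m^i$ is a step the paper's proof leaves implicit (it is essentially the later Lemma \ref{qratios}), so you are merely being more careful at that point.
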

\begin{proof}
We prove our result by induction on $m\in\mathbb{N}$. We have already established it for $m=2$. Suppose the case holds for some $m$ where $m\geq 2$. We prove it holds for $P_{m+1}=(P_m)^{q_{m-1}}P_{m-1}$.\par
First assume that $\frac{c_{m,q_{m-1}}b_{m-1}}{d_{m,q_{m-1}}d_{m-1}}>0$. By Lemma \ref{power}, we have $|d_{m,q_{m+1}}|\geq |c_{m,q_{m+1}}|+1$. By Lemma \ref{positive}, we have the desired inequalities holding for $P_{m+1}$ with the observation by Lemma \ref{power} that 
\begin{align*}
|d_{m+1}|&\geq |d_{m,q_{m-1}}||d_{m-1}|-|c_{m,q_{m-1}}||b_{m-1}|\\
&\geq |d_{m,q_{m-1}}|(|b_{m-1}|+1)-|c_{m,q_{m-1}}||b_{m-1}|\\
&\geq(|d_{m,q_{m-1}}|-|c_{m,q_{m-1}}|)|b_{m-1}|+|d_m,q_{m-1}|\\
&>|d_m,q_{m-1}|\\
&>|d_m|\\
&>r_2,r_4.
\end{align*}
Now assume that $\frac{c_{m,q_{m-1}}b_{m-1}}{d_{m,q_{m-1}}d_{m-1}}<0$. By induction, we have all of the inequalities holding in Lemma \ref{positive}. Lemma \ref{positive} thus gives us all of the desired inequalities holding for $P_{m+1}$ again with the observation that $|d_{m+1}|>|d_m|>r_2,r_4$.
\end{proof}
Also, with the help of Lemma \ref{positive}, we can obtain the following.
\begin{lemma}\label{ratios3}
Let $a_m$, $b_m$, $c_m$, and $d_m$ be as defined in Notation \ref{P_1P_2}. For all even $m\geq 2$, we have
\begin{equation*}
\frac{r_1}{r_2}\leq\frac{|a_m|}{|b_m|},\frac{|c_m|}{|d_m|}\leq\frac{r_3}{r_4}
\end{equation*}
and $|d_m|>r_2,r_4$.
\end{lemma}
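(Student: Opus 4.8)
The argument will closely mirror the proof of Lemma \ref{ratios1}, the difference being that we now invoke the \emph{second} conclusion of Lemma \ref{positive} (the one controlling the ratios $|a|/|b|$ and $|c|/|d|$) in place of the first. The organising observation is that in the recursion $P_{m+2}=P_{m+1}^{q_m}P_m$ the matrix $P_m$ sits as the \emph{right} factor, and Lemma \ref{positive} says precisely that the bounds on $|a|/|b|$ and $|c|/|d|$ are the ones passed up from the right factor of a product. Hence a single application of that lemma will push the bound from $P_m$ to $P_{m+2}$, advancing the index by $2$; beginning from the base case $m=2$ fixed in the discussion before the lemma, this reaches exactly the even indices. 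It reaches no more, because the odd-indexed $P_m$ appear only as $P_{m-1}$ on the right of $P_{m+1}=P_m^{q_{m-1}}P_{m-1}$, so to start an induction for odd indices one would need the bound for $P_1$, which is only a prefix of $P_2$ and is not assumed to satisfy it.

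In detail, I would induct on even $m\geq 2$, the base case $m=2$ being the inequality $\frac{r_1}{r_2}\leq\frac{|a_2|}{|b_2|},\frac{|c_2|}{|d_2|}\leq\frac{r_3}{r_4}$ together with $|d_2|>r_2,r_4$ recorded before the lemma. For the inductive step, suppose the claim holds for some even $m\geq 2$ and write $P_{m+2}=P_{m+1}^{q_m}\cdot P_m$. To apply the second part of Lemma \ref{positive} with left factor $P_{m+1}^{q_m}$ and right factor $P_m$, I must verify its hypotheses. By Lemma \ref{limitsinfinity} (and the remark following it) we may assume $|d_j|>|b_j|>|a_j|\geq 1$, and hence $|c_j|\geq 2$, for all $j\geq 2$; this gives the orientation inequalities $|d_m|\geq|b_m|$ and $|c_m|\geq|a_m|$ for the right factor, the nonvanishing of $b_m$ and $d_m$, and $\det P_m=\pm 1$. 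For the left factor we have $\det P_{m+1}^{q_m}=\pm 1$, while the orientation inequalities $|d_{m+1,q_m}|\geq|c_{m+1,q_m}|$ and $|b_{m+1,q_m}|\geq|a_{m+1,q_m}|$, together with $c_{m+1,q_m},d_{m+1,q_m}\neq 0$, come from Lemma \ref{power} applied to $P_{m+1}$: the monotonicities $|d_i|-|c_i|\geq|d_{i-1}|-|c_{i-1}|$ and $|b_i|-|a_i|\geq(|b_{m+1}|-|a_{m+1}|)(|b_{i-1}|-|a_{i-1}|)\geq|b_{i-1}|-|a_{i-1}|$, started from $|d_{m+1}|-|c_{m+1}|\geq 0$ and $|b_{m+1}|-|a_{m+1}|\geq 0$, propagate up to $i=q_m$. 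Finally the bound $\frac{r_1}{r_2}\leq\frac{|a_m|}{|b_m|},\frac{|c_m|}{|d_m|}\leq\frac{r_3}{r_4}$ on the right factor $P_m$ is the inductive hypothesis, and $|d_m|>r_2,r_4$ is supplied by Lemma \ref{ratios1}.

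All hypotheses being in place, the second conclusion of Lemma \ref{positive} gives $\frac{r_1}{r_2}\leq\frac{|a_{m+2}|}{|b_{m+2}|},\frac{|c_{m+2}|}{|d_{m+2}|}\leq\frac{r_3}{r_4}$, and $|d_{m+2}|>r_2,r_4$ follows from $|d_{m+2}|>|d_m|$, which is exactly the growth already isolated in the proof of Lemma \ref{ratios1}. This closes the induction. I do not expect any real obstacle: the reasoning is essentially mechanical once Lemmas \ref{positive}, \ref{power}, \ref{limitsinfinity} and \ref{ratios1} are available, and the only point that needs attention is checking the orientation and size hypotheses of Lemma \ref{positive} for the power $P_{m+1}^{q_m}$ — which is where Lemma \ref{power} enters — and bearing in mind that this induction legitimately delivers only the even indices.
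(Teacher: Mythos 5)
Your argument is correct and is essentially the paper's own: the paper gives no written proof beyond the remark that the lemma follows ``with the help of Lemma \ref{positive}'', and your induction in steps of two---applying the second (right-factor) conclusion of Lemma \ref{positive} to $P_{m+2}=P_{m+1}^{q_m}P_m$ starting from the base case $m=2$ fixed at the start of Section \ref{case2}---is precisely the intended filling-in of that sketch, mirroring the proof of Lemma \ref{ratios1} and correctly accounting for why only even indices are reached. The auxiliary hypothesis checks you make (e.g.\ invoking Lemma \ref{power} for the orientation inequalities of $P_{m+1}^{q_m}$, and Lemma \ref{ratios1} for $|d_m|>r_2,r_4$) are at the same level of rigor as the paper's own use of these lemmas in the proof of Lemma \ref{ratios1}.
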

By Lemmas \ref{ratio1}, we can also obtain that there exists positive integers $r_9,r_{10},r_{11},r_{12}$ such that
\begin{equation*}
\frac{r_9}{r_{10}}\leq\frac{|a_3|}{|b_3|},\frac{|c_3|}{|d_3|}\leq\frac{r_{11}}{r_{12}}
\end{equation*} 
with $r_9<r_{10}$, $r_{11}<r_{12}$, and $|d_2|>r_{10},r_{12}$ and so also with the help of Lemma \ref{positive}, we can obtain the following.
\begin{lemma}\label{ratios2}
Let $a_m$, $b_m$, $c_m$, and $d_m$ be as defined in Notation \ref{P_1P_2}. For all odd $m\geq 3$, we have
\begin{equation*}
\frac{r_9}{r_{10}}\leq\frac{|a_m|}{|b_m|},\frac{|c_m|}{|d_m|}\leq\frac{r_{11}}{r_{12}}
\end{equation*}
and $|d_m|>r_2,r_4$.
\end{lemma}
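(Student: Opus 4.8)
The plan is to run essentially the induction of Lemma~\ref{ratios1}, but up the odd-indexed subsequence $P_3,P_5,P_7,\dots$, feeding the product $P_{m+2}=P_{m+1}^{q_m}P_m$ into the \emph{second} conclusion of Lemma~\ref{positive} --- the one that propagates bounds on $|a|/|b|$ and $|c|/|d|$ from the right-hand factor of a matrix product to the product itself. The base case $m=3$ is exactly the choice of $r_9,r_{10},r_{11},r_{12}$ made just before the statement (extracted from $P_3$ via Lemma~\ref{ratio1}), together with the built-in inequality $|d_2|>r_{10},r_{12}$; since $|d_{k+2}|>|d_k|$ by Lemma~\ref{limitsinfinity}, so that $|d_m|$ is non-decreasing along each parity class, the last clause $|d_m|>r_2,r_4$ (and, what is actually used below, $|d_m|>r_{10},r_{12}$) for every odd $m\geq 3$ drops out of the corresponding base inequalities for $|d_2|$ and $|d_3|$.

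For the inductive step, assume the bounds for some odd $m\geq 3$ and apply Lemma~\ref{positive} to $M_3=M_1M_2$ with $M_1=P_{m+1}^{q_m}$, $M_2=P_m$, and $(r_5,r_6,r_7,r_8)=(r_9,r_{10},r_{11},r_{12})$. I would verify its hypotheses as follows. The shape inequalities $|d_2|\geq|b_2|\geq|a_2|$ and $|c_2|\geq|a_2|$ for $M_2=P_m$ come from Lemma~\ref{positive}/Lemma~\ref{limitsinfinity} together with the standing assumption $|d_m|>|c_m|,|b_m|>|a_m|>0$. The shape inequalities $|d_1|\geq|c_1|$ and $|b_1|\geq|a_1|$ for $M_1=P_{m+1}^{q_m}$ are where Lemma~\ref{power} enters: $\det P_{m+1}=\pm1$ forces $\gcd(|c_{m+1}|,|d_{m+1}|)=1$, hence $|d_{m+1}|-|c_{m+1}|\geq1$ and likewise $|b_{m+1}|-|a_{m+1}|\geq1$, and Lemma~\ref{power} shows these gaps do not decrease (in fact grow) under taking powers, so they remain $\geq1>0$; the same strict monotonicity of $|d_i|$ off a base value $\geq2$ rules out $P_{m+1}^{q_m}$ being triangular and hence gives that $a_1,b_1,c_1,d_1$ are all nonzero, as Lemma~\ref{positive} also needs. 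The ratio bounds $\tfrac{r_9}{r_{10}}\leq\tfrac{|a_m|}{|b_m|},\tfrac{|c_m|}{|d_m|}\leq\tfrac{r_{11}}{r_{12}}$ are the induction hypothesis, and $|d_2|=|d_m|>r_{10},r_{12}$ was noted in the previous paragraph. Lemma~\ref{positive} then outputs $\tfrac{r_9}{r_{10}}\leq\tfrac{|a_{m+2}|}{|b_{m+2}|},\tfrac{|c_{m+2}|}{|d_{m+2}|}\leq\tfrac{r_{11}}{r_{12}}$, and $|d_{m+2}|>|d_m|$ by Lemma~\ref{limitsinfinity}, closing the induction. The sign alternative $\tfrac{c_1b_2}{d_1d_2}\gtrless0$ needs no separate handling, being absorbed inside Lemma~\ref{positive}; Lemma~\ref{samesigns} and Lemma~\ref{limitsinfinity} are invoked only to guarantee the relevant entries are nonzero.

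The one point needing genuine care --- the sole real difference from Lemma~\ref{ratios1} --- is checking that the power $P_{m+1}^{q_m}$, which here sits on the \emph{left} of the product instead of being the harmless right factor that carries the ratios, still obeys $|d|\geq|c|$ and $|b|\geq|a|$. This is precisely the purpose of Lemma~\ref{power}, and the determinant hypothesis is what forces the base-level gaps to be at least $1$, so that the monotonicity built into Lemma~\ref{power} keeps them strictly positive for every exponent $q_m$.
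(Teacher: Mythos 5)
Your proposal is correct and follows essentially the route the paper intends (and leaves implicit): the base case is the choice of $r_9,\dots,r_{12}$ for $P_3$, and the induction along $P_{m+2}=P_{m+1}^{q_m}P_m$ over odd indices feeds the second conclusion of Lemma~\ref{positive}, with Lemma~\ref{power} supplying the shape inequalities for the left factor $P_{m+1}^{q_m}$ and the monotonicity of $|d_m|$ giving the clause $|d_m|>r_2,r_4$. The only quibble is that you claim the gaps $|b_{m+1}|-|a_{m+1}|$, $|d_{m+1}|-|c_{m+1}|$ are at least $1$ (which can fail, e.g.\ $|a|=|b|=1$), but Lemma~\ref{positive} only needs these gaps to be nonnegative for the power, which Lemma~\ref{power} already preserves, so nothing breaks.
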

\begin{remark}
Without loss of generality, we will assume that $r_9=r_1$, $r_{10}=r_2$, $r_{11}=r_3$, and $r_{12}=r_4$ for the rest of this section.
\end{remark}
\begin{lemma}\label{qratios}
Let $a_m$, $b_m$, $c_m$, and $d_m$ be as defined in Notation \ref{P_1P_2}. For all $m\geq 2$ and $i\in\mathbb{N}$, let
\begin{equation*}
P_m^i:=\begin{bmatrix}
a_{m,i} & b_{m,i}\\
c_{m,i} & d_{m,i}
\end{bmatrix}.
\end{equation*}
Then we have
\begin{equation*}
\frac{r_1}{r_2}\leq\frac{|a_{m,i}|}{|c_{m,i}|},\frac{|b_{m,i}|}{|d_{m,i}|},\frac{|a_{m,i}|}{|b_{m,i}|},\frac{|c_{m,i}|}{|d_{m,i}|}\leq\frac{r_3}{r_4}.
\end{equation*}
\end{lemma}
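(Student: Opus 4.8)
The plan is to prove the claim by induction on $i \in \mathbb{N}$, using Lemma \ref{positive} as the inductive engine, with the base cases supplied by the results already established earlier in this section. For the base case $i = 1$ we need the four ratios $\frac{|a_m|}{|c_m|}, \frac{|b_m|}{|d_m|}, \frac{|a_m|}{|b_m|}, \frac{|c_m|}{|d_m|}$ to lie in $[\frac{r_1}{r_2}, \frac{r_3}{r_4}]$ for every $m \geq 2$, together with $|d_m| > r_2, r_4$. The estimates on $\frac{|a_m|}{|c_m|}$ and $\frac{|b_m|}{|d_m|}$ are exactly Lemma \ref{ratios1}. The estimates on $\frac{|a_m|}{|b_m|}$ and $\frac{|c_m|}{|d_m|}$ come from Lemma \ref{ratios3} when $m$ is even and from Lemma \ref{ratios2} when $m$ is odd, once we invoke the normalisation in the preceding Note that $r_9 = r_1$, $r_{10} = r_2$, $r_{11} = r_3$, $r_{12} = r_4$; and $|d_m| > r_2, r_4$ holds in all cases (and is already part of the statements of Lemmas \ref{ratios1}, \ref{ratios2}, \ref{ratios3}).

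For the inductive step, assume the four ratios for $P_m^{i-1}$ lie in $[\frac{r_1}{r_2}, \frac{r_3}{r_4}]$ and $|d_{m,i-1}| > r_2, r_4$. Write $P_m^i = P_m^{i-1} \cdot P_m$. I would first record, via Lemma \ref{samesigns} applied to $P_m$ (whose entries are all nonzero and whose determinant is $\pm 1$ by Lemma \ref{limitsinfinity} and Notation \ref{P_1P_2}), that $\frac{a_m}{c_m}$ and $\frac{b_m}{d_m}$ have the same sign; likewise for $P_m^{i-1}$. Then I apply Lemma \ref{positive} to the product $P_m^{i-1} \cdot P_m$: the hypotheses $|d_{m,i-1}| \geq |c_{m,i-1}|$, $|b_{m,i-1}| \geq |a_{m,i-1}|$ and $|d_m| \geq |b_m|$, $|c_m| \geq |a_m|$ follow from the inductive ratio bounds together with Lemma \ref{power} (which gives $|d_{m,i-1}| \geq |c_{m,i-1}| + 1$ and the analogous inequalities inside powers), and the remaining size hypotheses $|d_m| > r_2, r_4$ are the base-case facts already in hand. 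The two halves of Lemma \ref{positive} — one controlling $\frac{|a|}{|c|}, \frac{|b|}{|d|}$ and the other controlling $\frac{|a|}{|b|}, \frac{|c|}{|d|}$ — then deliver all four ratio bounds for $P_m^i$ simultaneously. Finally $|d_{m,i}| > |d_{m,i-1}| > r_2, r_4$ by the last inequality of Lemma \ref{power}, closing the induction.

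The main obstacle is bookkeeping rather than mathematics: Lemma \ref{positive} is stated for a single product of two matrices and requires verifying the orientation hypothesis (which pairs of entries dominate which) and the strict inequalities $|d| > r_j$ on the \emph{right} factor, so one must be careful about which factor plays the role of the first matrix and which the second, and about the fact that the two sign cases $\frac{c_1 b_2}{d_1 d_2} \gtrless 0$ in Lemma \ref{positive} both need to be dispatched — this is where Lemma \ref{samesigns} does the work, showing that whichever sign occurs, the hypotheses of Lemma \ref{positive} are met. One subtlety worth flagging is that Lemma \ref{positive} requires $r_j \in \mathbb{Z}$ (or $\mathbb{Z}\setminus\{0\}$) and the strict bounds $|d_2| > r_6, r_8$; since our $r_1, r_2, r_3, r_4$ are fixed positive integers independent of $m$ and $i$, and $|d_{m,i}| \to \infty$, these are harmless, but they must be checked at each application. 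No new estimate beyond Lemmas \ref{positive}, \ref{samesigns}, \ref{power}, \ref{limitsinfinity}, \ref{ratios1}, \ref{ratios2}, \ref{ratios3} is needed.
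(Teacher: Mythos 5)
Your proposal is correct, and it supplies exactly the argument the paper leaves implicit: the paper states Lemma \ref{qratios} without proof, and the intended justification is precisely your induction on $i$, with the base case from Lemmas \ref{ratios1}, \ref{ratios3}, \ref{ratios2} (after the normalisation $r_9=r_1,\dots,r_{12}=r_4$) and the inductive step from Lemma \ref{positive} applied to $P_m^{i-1}\cdot P_m$, keeping $|d_{m,i}|>r_2,r_4$ as part of the induction. Two minor remarks: Lemma \ref{samesigns} is not actually needed, since Lemma \ref{positive} already handles both sign cases internally; and the hypothesis $|c_m|\geq 2$ needed to quote Lemma \ref{power} does hold here because $0<\frac{|a_m|}{|c_m|}\leq\frac{r_3}{r_4}<1$ forces $|c_m|>|a_m|\geq 1$ (alternatively, one can bypass Lemma \ref{power} by noting directly that $|d_{m,i}|\geq|d_m|$).
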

\begin{remark}
For the rest of the section, we will let $t_1=\frac{(r_4-r_3)}{r_3r_4}$ and $t_2=\frac{(r_2r_4+r_1r_3)}{r_1r_4}$.
\end{remark}
\begin{lemma}\label{c_mrelations}
Let $c_m$ be as defined in Notation \ref{P_1P_2}. We have
\begin{equation*}
(t_1|c_{m-1}|)^{q_{m-2}}|c_{m-2}|\leq|c_m|\leq(t_2|c_{m-1}|)^{q_{m-2}}|c_{m-2}|
\end{equation*}
for all $m\geq 4$.
\end{lemma}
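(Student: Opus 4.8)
The plan is to track how $|c_m|$ is built up from the block $P_{m-1}$ raised to the power $q_{m-2}$ followed by the single block $P_{m-2}$, using the decomposition $P_m = P_{m-1}^{q_{m-2}} P_{m-2}$ from Notation~\ref{P_1P_2}. Write $P_{m-1}^{q_{m-2}}$ with entries $a_{m-1,q_{m-2}}, \dots, d_{m-1,q_{m-2}}$ as in Lemma~\ref{qratios}. Since $c_m$ is the $(2,1)$-entry of $P_{m-1}^{q_{m-2}} P_{m-2}$, we have $c_m = c_{m-1,q_{m-2}} a_{m-2} + d_{m-1,q_{m-2}} c_{m-2}$, and by Lemma~\ref{samesigns} together with the sign analysis already used in Lemmas~\ref{positive} and \ref{power}, the two summands are either both adding in absolute value or their difference is controlled; in either case one gets two-sided bounds of the form $|c_m| \asymp |d_{m-1,q_{m-2}}|\,|c_{m-2}|$ up to factors coming from the ratios $|a_{m-2}|/|c_{m-2}|$ and $|c_{m-1,q_{m-2}}|/|d_{m-1,q_{m-2}}|$, all of which lie in $[r_1/r_2, r_3/r_4]$ by Lemmas~\ref{ratios1}, \ref{qratios}.

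The second ingredient is an estimate for $|d_{m-1,q_{m-2}}|$, the $(2,2)$-entry of $P_{m-1}^{q_{m-2}}$. From Lemma~\ref{power} (the recursion $|d_i| = |d|\,|d_{i-1}| \pm |c|\,|b_{i-1}|$ and the companion relations), one derives $|d_{m-1,i}| = |d_{m-1}|\,|d_{m-1,i-1}| \pm |c_{m-1}|\,|b_{m-1,i-1}|$; using the uniform ratio bounds of Lemma~\ref{qratios} to replace $|b_{m-1,i-1}|$ by a quantity comparable to $|d_{m-1,i-1}|$, this becomes a linear recursion in $|d_{m-1,i-1}|$ with multiplier pinched between $|d_{m-1}|(1 - r_3/r_4 \cdot r_3/r_4)$-type and $|d_{m-1}|(1 + \cdots)$-type expressions. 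Iterating $q_{m-2}$ times and bounding $|d_{m-1}|$ above and below by constant multiples of $|c_{m-1}|$ (again via Lemma~\ref{ratios1}, since $|a_{m-1}|/|c_{m-1}|, |c_{m-1}|/|d_{m-1}| \in [r_1/r_2, r_3/r_4]$) yields
\[
(t_1 |c_{m-1}|)^{q_{m-2}} \;\lesssim\; |d_{m-1,q_{m-2}}| \;\lesssim\; (t_2 |c_{m-1}|)^{q_{m-2}},
\]
where the constants $t_1 = (r_4 - r_3)/(r_3 r_4)$ and $t_2 = (r_2 r_4 + r_1 r_3)/(r_1 r_4)$ are exactly the ones recorded in the preceding note; indeed $t_1$ is what survives from $|d_{m-1}| - |c_{m-1}|\cdot(\text{ratio})$ after the worst-case loss, and $t_2$ from $|d_{m-1}| + |c_{m-1}|\cdot(\text{ratio})$ after the worst-case gain. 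Combining this with the first step and absorbing the bounded ratio $|c_{m-2}|$-to-itself factors (which are exactly $1$, or controlled constants that get folded into the inequality) gives the claimed two-sided bound $(t_1|c_{m-1}|)^{q_{m-2}}|c_{m-2}| \le |c_m| \le (t_2|c_{m-1}|)^{q_{m-2}}|c_{m-2}|$ for all $m \ge 4$.

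The main obstacle I anticipate is bookkeeping the signs and the direction of the inequalities consistently: each of the entries $a_{m-2}, c_{m-2}$ and $c_{m-1,q_{m-2}}, d_{m-1,q_{m-2}}$ can be positive or negative, so one must check that in the "difference" case the cross-terms do not cause cancellation below the stated lower bound. This is handled exactly as in the proof of Lemma~\ref{power}: the determinant being $\pm 1$ forces $\gcd$-coprimality and hence strict inequalities $|a_{m-2}| < |c_{m-2}| < |d_{m-2}|$, so the subtracted term is always dominated and the lower bound $t_1$ (which is positive precisely because $r_3 < r_4$) is not breached. The remaining work—choosing the index $m \ge 4$ so that all the "sufficiently large $m$" hypotheses from Lemma~\ref{limitsinfinity} are in force, and verifying the base instances—is routine.
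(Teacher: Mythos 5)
Your overall strategy is the same as the paper's: decompose $P_m = P_{m-1}^{q_{m-2}}P_{m-2}$, expand the $(2,1)$-entry $c_m = c_{m-1,q_{m-2}}a_{m-2} + d_{m-1,q_{m-2}}c_{m-2}$, control all ratios by Lemmas \ref{ratios1}, \ref{ratios3}, \ref{ratios2}, \ref{qratios}, and iterate over the exponent $q_{m-2}$. The genuine gap is in the execution: the lemma asserts an exact two-sided inequality with the specific constants $t_1=(r_4-r_3)/(r_3r_4)$ and $t_2=(r_2r_4+r_1r_3)/(r_1r_4)$, whereas your estimates are carried out only ``up to factors'' ($\lesssim$, $\asymp$, ``constants get folded into the inequality''), and with your choice of pivot those factors do not vanish. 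Concretely, you compare $|c_m|$ with $|d_{m-1,q_{m-2}}|\,|c_{m-2}|$ and then estimate the $(2,2)$-entry of the power. For the upper bound this loses: from $|c_m|\le |d_{m-1,q_{m-2}}||c_{m-2}|+|c_{m-1,q_{m-2}}||a_{m-2}|$ together with $|c_{m-1,q_{m-2}}|\le (r_3/r_4)|d_{m-1,q_{m-2}}|$ and $|a_{m-2}|\le (r_3/r_4)|c_{m-2}|$ you get $|c_m|\le (1+r_3^2/r_4^2)|d_{m-1,q_{m-2}}||c_{m-2}|$, and combining with $|d_{m-1,q_{m-2}}|\le (t_2|c_{m-1}|)^{q_{m-2}-1}(r_2/r_1)|c_{m-1}|$ gives the stated bound only if $(1+r_3^2/r_4^2)(r_2/r_1)\le t_2=(r_2/r_1)+(r_3/r_4)$, i.e.\ only if $r_3/r_4\le r_1/r_2$, which fails whenever $r_1/r_2<r_3/r_4$ (the generic situation). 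So as written you obtain $|c_m|\le C(t_2|c_{m-1}|)^{q_{m-2}}|c_{m-2}|$ with some fixed $C>1$, not the inequality claimed. (Your lower bound does survive the same pivot, because $(1-r_3^2/r_4^2)(r_4/r_3)=(r_4^2-r_3^2)/(r_3r_4)\ge t_1$, but you did not carry the constants to see this.)

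The repair is exactly what the paper does: pivot on the $(2,1)$-entry of the power rather than the $(2,2)$-entry. A single multiplication gives $t_1|c_{m-1,q_{m-2}}||c_{m-2}|\le |c_m|\le t_2|c_{m-1,q_{m-2}}||c_{m-2}|$, using $|d_{m-1,q_{m-2}}|\ge (r_4/r_3)|c_{m-1,q_{m-2}}|$ on the low side and $|d_{m-1,q_{m-2}}|\le (r_2/r_1)|c_{m-1,q_{m-2}}|$ on the high side, and the identical one-step estimate applied inside the power yields $t_1^{q-1}|c_{m-1}|^{q}\le |c_{m-1,q}|\le t_2^{q-1}|c_{m-1}|^{q}$ by induction on $q$; multiplying gives $(t_i|c_{m-1}|)^{q_{m-2}}|c_{m-2}|$ with no stray factors. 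For the later application in Proposition \ref{limc_mexists} your weaker constants would actually suffice, but as a proof of the lemma as stated, the ``fold in the constants'' step is where the argument breaks.
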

\begin{proof}
Let $m\geq 4$. By Lemmas \ref{ratios3}, \ref{ratios2}, and \ref{qratios}, we have
\begin{align*}
|c_m| &\geq |d_{m-1,q_{m-2}}||c_{m-2}|- |c_{m-1,q_{m-2}}||a_{m-2}|\\
&\geq\frac{r_4}{r_3}|c_{m-1,q_{m-2}}||c_{m-2}|-\frac{r_3}{r_4}|c_{m-1,q_{m-2}}||c_{m-2}|\\
&=\frac{(r_4-r_3)}{r_3r_4}|c_{m-1,q_{m-2}}||c_{m-2}|.
\end{align*}
Through induction on $q_{m-2}$, we can similarly derive that $|c_{m-1,q_{m-2}}|\geq t_1^{q_{m-2}-1}|c_{m-1}|^{q_{m-2}}$. Also, we have
\begin{align*}
|c_m| &\leq |d_{m-1,q_{m-2}}||c_{m-2}|+|c_{m-1,q_{m-2}}||a_{m-2}|\\
&\leq\frac{r_2}{r_1}|c_{m-1,q_{m-2}}||c_{m-2}|+\frac{r_3}{r_4}|c_{m-1,q_{m-2}}||c_{m-2}|\\
&=\frac{(r_2r_4+r_1r_3)}{r_1r_4}|c_{m-1,q_{m-2}}||c_{m-2}|.
\end{align*}
Again, through induction on $q_{m-2}$, we can similarly derive that $|c_{m-1,q_{m-2}}|\leq t_2^{q_{m-2}-1}|c_{m-1}|^{q_{m-2}}$. Thus we have our result.
\end{proof}
\begin{proposition}\label{limc_mexists}
Let $q_m$, $c_m$, and $k_m$ be as defined in Notation \ref{P_1P_2}. We have
\begin{equation*}
\lim_{m\rightarrow\infty}|c_m|^{1/k_m}
\end{equation*}
exists, is finite, and is greater than $1$.
\end{proposition}
\begin{proof}
It suffices to show that
\begin{equation*}
\lim_{m\rightarrow\infty}\frac{\log|c_m|}{k_m}
\end{equation*}
exists, is finite, and is positive. Let $u_m=\log |c_m|$ and $s_m:=\frac{u_m}{k_m}$. By Lemma \ref{c_mrelations}, we have that $q_{m-1}(u_m+\log t_1)+u_{m-1}\leq u_{m+1}\leq q_{m-1}(u_m+\log t_2)+u_{m-1}$ for all $m\in\mathbb{N}$, $m\geq 3$. Let $m\in\mathbb{N}$, $m\geq 3$. We have
\begin{align}
s_{m+1}-s_m&=\frac{u_m}{k_{m+1}}\left(\frac{u_{m+1}}{u_m}-\frac{k_{m+1}}{k_m}\right)\nonumber\\
&\leq\frac{u_m}{k_{m+1}}\left(\frac{q_{m-1}(u_m+\log t_2)+u_{m-1}}{u_m}-\frac{k_{m+1}}{k_m}\right)\nonumber\\
&=\frac{u_m}{k_{m+1}}\left(\frac{u_{m-1}+q_{m-1}\log t_2}{u_m}-\frac{k_{m-1}}{k_m}\right)\nonumber\\
&=\frac{u_{m-1}}{k_{m+1}}-\frac{u_mk_{m-1}}{k_{m+1}k_m}+\frac{q_{m-1}\log t_2}{k_{m+1}}\nonumber\\
&=\frac{\left(u_{m-1}-\dfrac{u_mk_{m-1}}{k_m}\right)}{k_{m+1}}+\frac{q_{m-1}\log t_2}{k_{m+1}}\nonumber\\
&=\frac{k_{m-1}(s_{m-1}-s_m)}{k_{m+1}}+\frac{q_{m-1}\log t_2}{k_{m+1}}.\label{t2ineq}
\end{align}
Similarly, we have
\begin{equation}
s_m-s_{m+1}\leq\frac{k_{m-1}(s_{m-1}-s_m)}{k_{m+1}}+\frac{q_{m-1}\log t_1}{k_{m+1}}.\label{t1ineq}
\end{equation}
Therefore
\begin{equation*}
|s_m-s_{m+1}|\leq\frac{k_{m-1}|s_m-s_{m-1}|}{k_{m+1}}+\frac{q_{m-1}\log t}{k_{m+1}},
\end{equation*}
where $t=\max\{t_2,t_1^{-1}\}$. Note that
\begin{equation*}
\frac{k_{m+1}}{k_{m-1}}=\frac{k_m+q_{m+1}k_{m-1}}{k_{m-1}}>1+q_{m+1}\geq 2.
\end{equation*}
Thus
\begin{equation*}
|s_{m+1}-s_m|\le\frac{|s_m-s_{m-1}|}{2}+\frac{\log t}{k_m}.
\end{equation*}
Consider the Fibonacci sequence $F_1=1, F_2=1$, and $F_m=F_{m-1}+F_{m-2}$ for all $m\geq 3$. Then we have
\begin{equation}
k_m\geq F_m=\left\lfloor\frac{\varphi^m}{\sqrt{5}}\right\rfloor\geq\frac{\varphi^m}{5}.\label{kngrowth}
\end{equation}
Thus 
\begin{equation*}
|s_{m+1}-s_m|\le\frac{|s_m-s_{m-1}|}{2}+\frac{5\log t}{\varphi^m}.
\end{equation*}
We can prove by induction on $l\in\mathbb{N}$ that for all $m\geq 3$ and $l\geq 1$, we have
\begin{align*}
|s_{m+l}-s_{m+l-1}|&\leq\frac{|s_m-s_{m-1}|}{2^l}+\left(\frac{1}{2^{l-1}\cdot\varphi^m}+\frac{1}{2^{l-2}\varphi^{m+1}}+\ldots+\frac{1}{\varphi^{m+l-1}}\right)\log t\\
&<\frac{|s_m-s_{m-1}|}{2^l}+\frac{1}{\varphi^{m+l-1}}\left(1+\frac{\varphi}{2}+\frac{\varphi^2}{4}+\ldots\right)\\
&=\frac{|s_m-s_{m-1}|}{2^l}+\frac{1}{\varphi^{m+l-1}}\left(\frac{2}{2-\varphi}\right).
\end{align*}
By a geometric series argument, the limit exists and is finite. It remains to show the limit is positive. By Lemma \ref{c_mrelations}, we have $q_{m-2}(\log t_1+\log|c_{m-1}|)+\log|c_{m-2}|\leq\log|c_m|$ for all $m\geq 3$. We thus have
\begin{equation*}
q_{m-2}(\log|c_{m-1}|+\log t_1)+(\log|c_{m-2}|+\log t_1)\leq\log|c_m|+\log t_1
\end{equation*}
for all $m\geq 3$. By Lemma \ref{limitsinfinity}, we have $\lim_{m\rightarrow\infty}\log|c_m|=\infty$. We can therefore deduce that there exists $C_2>0$ such that for all sufficiently large $m\in\mathbb{N}$, we have $\log|c_m|+\log t_1>C_2k_m$. It follows that the limit is positive.
\end{proof}
\begin{remark}
Let $q_m$, $c_m$, and $q_m$ be as in Proposition \ref{limc_mexists}. Let $L:=\lim_{m\rightarrow\infty}|c_m|^{1/k_m}$.
\end{remark}
\begin{lemma}\label{prodrelc_m}
Let $P_m$, $c_m$, $Q_n$, $e_n$, $f_n$, $g_n$, and $h_n$ be as defined in Notation \ref{P_1P_2}. Let $n\in\mathbb{N}$ such that there exists 
$2\leq m_1<m_2<...<m_l$ and $n_1,\ldots,n_l\in\mathbb{N}$ with the following properties:
\begin{enumerate}
\item for all $1\leq i\leq l$, we have $n_i\leq q_{m_i-1}$
\item for all $2\leq i\leq l$, if $n_i=q_{m_i-1}$, then $m_{i-1}+2\leq m_i$.
\item $Q_n=(P_{m_l})^{n_l}(P_{m_{l-1}})^{n_{l-1}}...(P_{m_1})^{n_1}$
\end{enumerate}
Then
\begin{equation*}
\frac{r_1}{r_2}\leq\frac{|e_n|}{|g_n|},\frac{|f_n|}{|h_n|},\frac{|e_n|}{|f_n|},\frac{|g_n|}{|h_n|}\leq\frac{r_3}{r_4}.
\end{equation*} 
Also, we have
\begin{equation*}
t_1^{l+n_1+\ldots+n_l}|c_{m_l}|^{n_l}|c_{m_{l-1}}|^{n_{l-1}}...|c_{m_1}|^{n_1}
\leq|g_n|
\leq t_2^{l+n_1+\ldots+n_l}|c_{m_l}|^{n_l}|c_{m_{l-1}}|^{n_{l-1}}...|c_{m_1}|^{n_1}.
\end{equation*}
\end{lemma}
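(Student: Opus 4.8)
The plan is to factor $Q_n$ into its constituent $P$-blocks and induct on the number of blocks, using Lemma \ref{positive} to carry the four entry-ratio bounds through each multiplication and a sign-free one-step estimate on the bottom-left entry to pin down $|g_n|$.

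To set up, write $Q_n = B_1B_2\cdots B_N$ with $N = n_1 + \cdots + n_l$, where the first $n_l$ factors are $P_{m_l}$, the next $n_{l-1}$ are $P_{m_{l-1}}$, and so on, matching $Q_n = (P_{m_l})^{n_l}\cdots(P_{m_1})^{n_1}$. Each $B_j$ is some $P_{m_i}$ with $m_i \ge 2$; by Lemmas \ref{ratios1}, \ref{ratios3} and \ref{ratios2} (with the convention $r_9 = r_1,\ r_{10} = r_2,\ r_{11} = r_3,\ r_{12} = r_4$) every such $P_{m_i}$ has all four entry-ratios $|a_{m_i}|/|c_{m_i}|,\ |b_{m_i}|/|d_{m_i}|,\ |a_{m_i}|/|b_{m_i}|,\ |c_{m_i}|/|d_{m_i}|$ in $[r_1/r_2, r_3/r_4]$, satisfies $|d_{m_i}| > r_2, r_4$, has integer entries and determinant $\pm 1$, and no zero entry. (Lemma \ref{positive} is stated for determinant $1$ but uses only $|ad-bc| = 1$, so it applies equally to determinant $\pm 1$.)

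The heart of the argument is an induction on $k$, for $1 \le k \le N$, on the truncated products $R_k := B_{N-k+1}\cdots B_N$, proving simultaneously: (i) $R_k$ has all four entry-ratios in $[r_1/r_2, r_3/r_4]$; (ii) $|d_{R_k}| > r_2, r_4$; and (iii) $t_1^{\,k-1}\Pi_k \le |c_{R_k}| \le t_2^{\,k-1}\Pi_k$, where $\Pi_k$ is the product of the numbers $|c_{m_i}|$ attached to the $k$ rightmost blocks $B_{N-k+1},\ldots,B_N$. The base case $k=1$ is exactly the previous paragraph. For the step, write $R_{k+1} = B_{N-k}R_k$ with $B_{N-k}$ the left factor and $R_k$ the right factor, and denote the entries of $B_{N-k}$ by $a',b',c',d'$ (rows $(a',b')$, $(c',d')$) and those of $R_k$ by $\alpha,\beta,\gamma,\delta$ (rows $(\alpha,\beta)$, $(\gamma,\delta)$). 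The shape hypotheses of Lemma \ref{positive} ($|d'|\ge|c'|$, $|b'|\ge|a'|$ on the left; $|\delta|\ge|\beta|$, $|\gamma|\ge|\alpha|$ on the right) follow from the ratio bounds being $<1$, and the auxiliary conditions $|d'| > r_2, r_4$ and $|d_{R_k}| > r_2, r_4$ hold by Lemma \ref{ratios1} and by the induction hypothesis (ii); so both halves of Lemma \ref{positive} apply and give (i) for $R_{k+1}$. For (ii), the $(2,2)$-entry of $R_{k+1}$ is $c'\beta + d'\delta$, and since $|c'|\le|d'|$ and $|\beta| < |\delta|$ (integers, ratio $<1$), the reverse triangle inequality gives $|d_{R_{k+1}}| \ge |d'|(|\delta|-|\beta|) \ge |d'| = |d_{m_i}| > r_2, r_4$. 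For (iii), the $(2,1)$-entry of $R_{k+1}$ is $c'\alpha + d'\gamma$, so the triangle and reverse triangle inequalities give $|d'||\gamma| - |c'||\alpha| \le |c_{R_{k+1}}| \le |d'||\gamma| + |c'||\alpha|$; inserting $r_4/r_3 \le |d'|/|c'| \le r_2/r_1$ (ratio bound on $B_{N-k}$) and $|\alpha|/|\gamma| \le r_3/r_4$ (part (i) for $R_k$) yields $t_1|c'||\gamma| \le |c_{R_{k+1}}| \le t_2|c'||\gamma|$ — the lower bound is nonnegative precisely because these estimates force $|d'||\gamma| \ge |c'||\alpha|$ — and combining with (iii) for $R_k$, together with $|c'| = |c_{m_i}|$ and $\Pi_{k+1} = |c_{m_i}|\Pi_k$, gives (iii) for $R_{k+1}$.

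Applying this at $k = N$: $R_N = Q_n$, so (i) is the first displayed inequality of the lemma, and (iii) reads $t_1^{\,N-1}\prod_i|c_{m_i}|^{n_i} \le |g_n| \le t_2^{\,N-1}\prod_i|c_{m_i}|^{n_i}$. Since $r_1 < r_2$ and $r_3 < r_4$ are positive integers one has $0 < t_1 < 1 < t_2$, and since $N - 1 \le l + N = l + n_1 + \cdots + n_l$, the exponents may be relaxed to $l + n_1 + \cdots + n_l$, which is the second displayed inequality. I do not anticipate a serious obstacle: the work is essentially bookkeeping — keeping straight which factor is left versus right in each invocation of Lemma \ref{positive}, checking that the condition $|d| > r_2, r_4$ survives every multiplication, and extracting the sign-free one-step bound $t_1|c_1||c_2| \le |c_3| \le t_2|c_1||c_2|$ on the bottom-left entry of a product (whose lower half requires the reverse triangle inequality together with the ratio estimates, in the same spirit as the proof of Lemma \ref{c_mrelations}).
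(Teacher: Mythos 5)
Your proposal is correct and takes essentially the same route as the paper: the paper's own proof is only a two-line sketch saying the ratio bounds follow by the reasoning of Lemma \ref{positive} and the bounds on $|g_n|$ by induction with the one-step estimate from the proof of Lemma \ref{c_mrelations}, which is exactly what you carry out, your factor-by-factor induction versus the paper's block-by-block one being an immaterial difference in granularity. Your exponent $n_1+\cdots+n_l-1$ is in fact slightly sharper than the stated $l+n_1+\cdots+n_l$, and your relaxation of it using $0<t_1<1<t_2$ is valid.
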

\begin{proof}
The first pair of inequalities follows by similar reasoning as in the proof of Lemma \ref{positive}. The second pair of inequalities can by proved by induction on $l\in\mathbb{N}$ with the base case and induction step proved as in the proof of Lemma \ref{c_mrelations}.
\end{proof}
\begin{proposition}\label{rootgnj}
Let $P_m$, $k_m$, $Q_n$, and $g_n$ be as defined in Notation \ref{P_1P_2}. Let $n_1,n_2,\ldots,n_j,\ldots,$ be the list of natural numbers such that for each $n_j$, there exists $2\leq m_1<m_2<...<m_l$ and $n_{j,1},\ldots,n_{j,l}\in\mathbb{N}$ with the following properties:
\begin{enumerate}
\item for all $1\leq i\leq l$, we have $n_{j,i}\leq q_{m_i-1}$
\item for all $2\leq i\leq l$, if $n_{j,i}=q_{m_i-1}$, then $m_{i-1}+2\leq m_i$
\item $Q_{n_j}=(P_{m_l})^{n_{j,l}}(P_{m_{l-1}})^{n_{j,l-1}}...(P_{m_1})^{j,n_1}$.
\end{enumerate}

We have
\begin{equation*}
\lim_{j\rightarrow\infty}|g_{n_j}|^{1/n_j}=L.
\end{equation*}
\end{proposition}
\begin{proof}
It suffices to prove that
\begin{equation*}
\lim_{j\rightarrow\infty}\frac{\log|g_{n_j}|}{n_j}=\log L.
\end{equation*}
We have
\begin{equation*}
\lim_{m\rightarrow\infty}\frac{\log|c_m|}{k_m}=\log L.
\end{equation*}
Let $\epsilon>0$. Pick $\frac{\log L}{2}>\delta_1>0$ and $\delta_2,\delta_3,\delta_4>0$ such that $(\log L+\delta_1)(1+\delta_2)<\log L+\epsilon$ and $\frac{(1-\delta_3)(\log L-\delta_1)}{(1+\delta_4)}>\log L-\epsilon$. Choose $M\in\mathbb{N}$ such that for all $m\geq M$, we have
\begin{equation}
\left|\frac{\log|c_m|}{k_m}-\log L\right|<\delta_1\label{epsiloncond1},
\end{equation}
\begin{equation}
\frac{10}{\varphi^{M-1}(\varphi-1)\log L}+\frac{2}{k_M\log L}<\frac{\delta_2}{2}\label{epsiloncond5},
\end{equation}
and
\begin{equation}
\frac{10\log(t_1^{-1})}{\varphi^{M-1}(\varphi-1)\log L}+\frac{2\log(t_1^{-1})}{k_M\log L}<\frac{\delta_3}{2}\label{epsiloncond6}.
\end{equation}
By \eqref{kngrowth}, we have $\lim_{m\rightarrow\infty}\frac{m}{k_m}=0$. Thus we can choose $N>M$ such that for all $m\geq N$, we have
\begin{equation}
\frac{2m\widehat{q_{M-1}}}{k_m\log L}+\frac{3\widehat{q_{M-1}}Mk_M}{k_m}<\frac{\delta_2}{2}\label{epsiloncond2},
\end{equation}
\begin{equation}
\frac{2m\widehat{q_{M-1}}\log(t_1^{-1})}{k_m\log L}<\frac{\delta_3}{2}\label{epsiloncond3},
\end{equation}
and
\begin{equation}
\frac{\widehat{q_{M-1}}Mk_M}{k_{m}}<\delta_4\label{epsiloncond4}
\end{equation}
where
\begin{equation*}
\widehat{q_{M-1}}:=\max\{q_1,q_1,\ldots,q_{M-1}\}.
\end{equation*}
Let $n_j\geq k_{N}$. Then there exists $2\leq m_1<m_2<...<m_l$ and $n_{j,1},\ldots,n_{j,l}\in\mathbb{N}$ with the following properties:
\begin{enumerate}
\item for all $1\leq i\leq l$, we have $n_{j,i}\leq q_{m_i-1}$
\item for all $2\leq i\leq l$, if $n_{j,i}=q_{m_i-1}$, then $m_{i-1}+2\leq m_i$
\item  $Q_n=(P_{m_l})^{n_{j,l}}(P_{m_{l-1}})^{n_{j,l-1}}...(P_{m_1})^{j,n_1}$.
\end{enumerate}

By Lemma \ref{prodrelc_m}, we have
\begin{align}
& (l+n_{j,1}+\ldots+n_{j,l})\log(t_1)+n_{j,l}\log|c_{m_l}|+...+n_{j,1}\log|c_{m_1}|\nonumber\\
& \qquad \qquad \leq\log|g_{n_j}|
\leq(l+n_{j,1}+\ldots+n_{j,l})\log(t_2)+n_{j,l}\log|c_{m_l}|+...+n_{j,1}\log|c_{m_1}|\label{gbounds}.
\end{align}
Pick $1\leq y\leq l$ such that $m_y\geq M>m_{y-1}$. Thus $y<M$. By \eqref{epsiloncond1}, we have
\begin{equation}
\log L-\delta_1<\frac{n_{j,l}\log|c_{m_l}|+...+n_{j,y}\log|c_{m_y}|}{n_{j,l}k_{m_l}+...+n_{j,y}k_{m_y}}<\log L+\delta_1\label{epsilon1bounds}.
\end{equation}
Also observe the following.
\begin{align}
\frac{l+n_{j,1}+\ldots+n_{j,l-1}}{\log|c_{m_l}|}&<\frac{(l+q_{m_1-1}+\ldots+q_{m_{l-1}-1})}{\log|c_{m_l}|}\nonumber\\
&<\frac{(l+q_{m_1-1}+\ldots+q_{m_{y-1}-1})}{\log|c_{m_l}|}+\frac{q_{m_y-1}+\ldots+q_{m_{l-1}-1}}{\log |c_{m_l}|}\nonumber\\
&<\frac{l\widehat{q_{M-1}}}{\log |c_{m_l}|}+\frac{\dfrac{k_{m_y+1}}{k_{m_y}}+\ldots+\dfrac{k_{m_{l-1}+1}}{k_{m_l-1}}}{\log |c_{m_l}|}\nonumber\\
&<\frac{l\widehat{q_{M-1}}}{k_{m_l}(\log L-\delta_1)}+\frac{\dfrac{k_{m_y+1}}{k_{m_y}}+\ldots+\dfrac{k_{m_{l-1}+1}}{k_{m_l-1}}}{k_{m_l}(\log L-\delta_1)}\nonumber\\
&<\frac{l\widehat{q_{M-1}}}{k_{m_l}(\log L-\delta_1)}+\frac{1}{(\log L-\delta_1)}\sum_{j=m_y}^{\infty}\frac{1}{k_j}\nonumber\\
&<\frac{l\widehat{q_{M-1}}}{k_{m_l}(\log L-\delta_1)}+\frac{1}{(\log L-\delta_1)}\sum_{j=m_y}^{\infty}\frac{5}{\varphi^y}\nonumber\\
&=\frac{l\widehat{q_{M-1}}}{k_{m_l}(\log L-\delta_1)}+\frac{5}{\varphi^{m_y-1}(\varphi-1)(\log L-\delta_1)}\nonumber\\
&<\frac{2m_l\widehat{q_{M-1}}}{k_{m_l}\log L}+\frac{10}{\varphi^{M-1}(\varphi-1)\log L}.\label{nlog}
\end{align}
Thus, by \eqref{epsiloncond1}, \eqref{epsiloncond5}, and \eqref{epsiloncond2}, we have
\begin{align}
 &\frac{(l+n_{j,1}+\ldots+n_{j,l})\log(t_2)+n_{j,l}\log|c_{m_l}|+...+n_{j,1}\log|c_{m_1}|}{n_{j,l}\log|c_{m_l}|+...+n_{j,y}\log|c_{m_y}|}\nonumber\\
&\qquad \qquad < 1+\frac{(l+n_{j,1}+\ldots+n_{j,l})\log(t_2)+n_{j,y-1}\log|c_{m_l}|+...+n_{j,1}\log|c_{m_1}|}{n_{j,l}\log|c_{m_l}|}\nonumber\\
&\qquad \qquad <1+\frac{(l+n_{j,1}+\ldots+n_{j,l})\log(t_2)+\widehat{q_{M-1}}y\log|c_{m_{y-1}}|}{n_{j,l}\log|c_{m_l}|}\nonumber\\
&\qquad \qquad <1+\frac{(l+n_{j,1}+\ldots+n_{j,l-1})\log(t_2)+\widehat{q_{M-1}}M\log|c_M|}{\log|c_{m_l}|}+\frac{1}{\log |c_{m_l}|}\nonumber\\
&\qquad \qquad <1+\frac{(l+n_{j,1}+\ldots+n_{j,l-1})\log(t_2)}{\log |c_{m_l}|}+\frac{\widehat{q_{M-1}}Mk_M(\log L+\delta_1)}{k_{m_l}(\log L-\delta_1)}+\frac{1}{\log |c_M|}\nonumber\\
&\qquad \qquad <1+\frac{2m_l\widehat{q_{M-1}}}{k_{m_l}\log L}+\frac{10}{\varphi^{M-1}(\varphi-1)\log L}+\frac{\widehat{q_{M-1}}Mk_M(\log L+\delta_1)}{k_{m_l}(\log L-\delta_1)}+\frac{1}{\log |c_M|}\nonumber\\
&\qquad \qquad <1+\frac{2m_l\widehat{q_{M-1}}}{k_{m_l}\log L}+\frac{10}{\varphi^{M-1}(\varphi-1)\log L}+\frac{3\widehat{q_{M-1}}Mk_M}{k_{m_l}}+\frac{2}{k_M\log L}\nonumber\\
&\qquad \qquad <1+\delta_2\label{epsilon2bound}.
\end{align}
Combining \eqref{gbounds}, \eqref{epsilon1bounds}, and \eqref{epsilon2bound}, we thus have
\begin{align*}
\frac{\log |g_{n_j}|}{n_j}&<\frac{(1+\delta_2)(n_{j,l}\log|c_{m_l}|+...+n_{j,y}\log|c_{m_y}|)}{n_{j,l}k_{m_l}+...+n_{j,y}k_{m_y}}\\
&<(\log L+\delta_1)(1+\delta_2)\\
&<\log L+\epsilon.
\end{align*}
Also, since $t_1\leq 1$, by \eqref{epsiloncond6}, \eqref{epsiloncond3}, and \eqref{nlog}, we have
\begin{align}
&\frac{(l+n_{j,1}+\ldots+n_{j,l})\log(t_1)+n_{j,l}\log|c_{m_l}|+...+n_{j,1}\log|c_{m_1}|}{n_{j,l}\log|c_{m_l}|+...+n_{j,y}\log|c_{m_y}|}\nonumber\\
&\qquad \qquad >1-\frac{(l+n_{j,1}+\ldots+n_{j,l})\log(t_1^{-1})}{n_{j,l}\log|c_{m_l}|}\nonumber\\
&\qquad \qquad >1-\frac{(l+n_{j,1}+\ldots+n_{j,l-1})\log(t_1^{-1})}{\log|c_{m_l}|}-\frac{2\log(t_1^{-1})}{k_M\log L}\nonumber\\
&\qquad \qquad >1-\frac{2m_l\widehat{q_{M-1}}\log(t_1^{-1})}{k_{m_l}\log L}-\frac{10\log(t_1^{-1})}{\varphi^{M-1}(\varphi-1)\log L}-\frac{2\log(t_1^{-1})}{k_M\log L}\nonumber\\
&\qquad \qquad >1-\delta_3.\label{epsilon3bound}
\end{align}
Also
\begin{align}
\frac{n_j}{n_{j,l}k_{m_l}+...+n_{j,y}k_{m_y}}&=1+\frac{n_{j,y-1}k_{m_{y-1}}+\ldots+n_{j,1}k_{m_1}}{n_{j,l}k_{m_l}+...+n_{j,y}k_{m_y}}\nonumber\\
&<1+\frac{\widehat{q_{M-1}}yk_{m_{y-1}}}{k_{m_l}}\nonumber\\
&<1+\frac{\widehat{q_{M-1}}Mk_M}{k_{m_l}}\nonumber\\
&<1+\delta_4\label{epsilon4bound}
\end{align}
by \eqref{epsiloncond4}. Combining \eqref{gbounds}, \eqref{epsilon1bounds}, \eqref{epsilon3bound}, and \eqref{epsilon4bound}, we have
\begin{align*}
\frac{\log |g_{n_j}|}{n_j}&>\frac{(1-\delta_3)(n_{j,l}\log|c_{m_l}|+...+n_{j,y}\log|c_{m_y}|)}{(1+\delta_4)(n_{j,l}k_{m_l}+...+n_{j,y}k_{m_y})}\\
&>\frac{(1-\delta_3)(\log L-\delta_1)}{(1+\delta_4)}\\
&>\log L-\epsilon.
\end{align*}
\end{proof}
\begin{lemma}\label{limitratios1}
Let $q_m$, $a_m$ and $c_m$ be as defined in Notation \ref{P_1P_2}. We have $\lim_{m\rightarrow\infty}\frac{a_m}{c_m}$ exists, is between $-1$ and $1$, and is irrational.
\end{lemma}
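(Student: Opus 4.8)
The plan is to show that $x_m:=a_m/c_m$ is a Cauchy sequence, with $|x_{m+1}-x_m|=O(|d_m|^{-2})$, and then to read off the location and irrationality of the limit. First I would recast things projectively: to a matrix $N=\begin{bmatrix}\alpha&\beta\\\gamma&\delta\end{bmatrix}$ with $\gamma\ne0$ attach the Möbius map $\phi_N(t)=\frac{\alpha t+\beta}{\gamma t+\delta}$, so that $x_m=\phi_{P_m}(\infty)$ and, from $\binom{a_{m+1}}{c_{m+1}}=P_m^{q_{m-1}}\binom{a_{m-1}}{c_{m-1}}$ (using $P_{m+1}=P_m^{q_{m-1}}P_{m-1}$), also $x_{m+1}=\phi_{P_m^{q_{m-1}}}(x_{m-1})$. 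By induction every $P_m$ and every power $P_m^{i}$ has determinant $\varepsilon\in\{\pm1\}$, so if $\nu_+,\nu_-$ are the eigenvalues of such an $N$ with $|\nu_+|\ge|\nu_-|$ (hence $\nu_+\nu_-=\varepsilon$, $|\nu_-|=|\nu_+|^{-1}$, $|\nu_+|\ge1$), then $\phi_N$ has attracting fixed point $\tilde p_N=(\nu_+-\delta)/\gamma$, and one checks the identity
\[
\phi_N(t)-\tilde p_N=\frac{\nu_-}{\gamma}-\frac{\varepsilon}{\gamma\,(\gamma t+\delta)},
\]
which remains valid at $t=\infty$ (the second term disappears). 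When the four entry-ratios of $N$ lie in $[r_1/r_2,\,r_3/r_4]$ and $|t|\le r_3/r_4$, the denominators are controlled: $|\gamma|\ge(r_1/r_2)|\delta|$, $|\gamma t+\delta|\ge(1-(r_3/r_4)^2)|\delta|$, and $|\nu_+|\ge(1-(r_3/r_4)^2)|\delta|-1$, so for $|\delta|$ large the identity gives $|\phi_N(t)-\tilde p_N|=O(|\delta|^{-2})$.

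I would apply this twice. Taking $N=P_m$ (entry-ratios controlled by Lemmas \ref{ratios1}, \ref{ratios3}, \ref{ratios2}) and $t=\infty$: $|x_m-\tilde p_{P_m}|=|\mu_-|/|c_m|=O(|d_m|^{-2})$, where $\mu_\pm$ are the eigenvalues of $P_m$. Taking $N=P_m^{q_{m-1}}$ (entry-ratios controlled by Lemma \ref{qratios}) and $t=x_{m-1}$, with $|x_{m-1}|\le r_3/r_4$ by Lemma \ref{ratios1} and $|d_{m,q_{m-1}}|\ge|d_m|$ by Lemma \ref{power}, and noting that $\phi_{P_m^{q_{m-1}}}$ has the same attracting fixed point $\tilde p_{P_m}$ as $\phi_{P_m}$: $|x_{m+1}-\tilde p_{P_m}|=O(|d_m|^{-2})$. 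By the triangle inequality $|x_{m+1}-x_m|\le|x_{m+1}-\tilde p_{P_m}|+|\tilde p_{P_m}-x_m|=O(|d_m|^{-2})$. From the inequality $|d_{m+2}|>|d_m|(|d_m|-|c_m|)\ge\frac{r_4-r_3}{r_4}|d_m|^2$ appearing in the proof of Lemma \ref{limitsinfinity}, $|d_m|$ eventually more than doubles along each residue class modulo $2$, so $\sum_m|d_m|^{-2}<\infty$; hence $\sum_m|x_{m+1}-x_m|<\infty$ and $x_m$ converges to some $M\in\mathbb{R}$.

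It remains to pin down $M$. For $m\ge2$, Lemma \ref{ratios1} gives $|x_m|=|a_m|/|c_m|\in[r_1/r_2,\,r_3/r_4]$ with $0<r_1/r_2$ and $r_3/r_4<1$, so $|M|\in[r_1/r_2,\,r_3/r_4]$, whence $M\in(-1,1)$ and $M\ne0$. For irrationality, note $\gcd(a_m,c_m)=1$ (any common divisor divides $a_md_m-b_mc_m=\pm1$), so $x_m$ is in lowest terms with denominator $|c_m|\to\infty$; moreover, summing the tail above, $|x_m-M|\le\sum_{j\ge m}|x_{j+1}-x_j|=O(|d_m|^{-2})=O(|c_m|^{-2})$ since $|c_m|\le|d_m|$. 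If $M$ were rational, say $M=p/q$ in lowest terms, then for all large $m$ we have $|c_m|>q$, so $x_m\ne p/q$ and $|x_m-M|\ge\frac{1}{q|c_m|}$; comparing with $|x_m-M|=O(|c_m|^{-2})$ forces $|c_m|$ to be bounded, contradicting $|c_m|\to\infty$. Hence $M$ is irrational.

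The only genuinely delicate step is the bound $|x_{m+1}-\tilde p_{P_m}|=O(|d_m|^{-2})$, i.e. the statement that the first-column direction of $P_{m+1}=P_m^{q_{m-1}}P_{m-1}$ is pinned to within $O(|d_m|^{-2})$ of the attracting eigendirection of $P_m$; carrying it out rigorously is precisely where the explicit identity above, the entry-ratio bounds of Lemmas \ref{ratios1}--\ref{qratios}, and the growth estimate of Lemma \ref{limitsinfinity} all come into play.
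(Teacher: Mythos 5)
Your proposal is correct, but it takes a genuinely different route from the paper. You get the Cauchy estimate $|x_{m+1}-x_m|=O(|d_m|^{-2})$ by comparing both $x_m=\phi_{P_m}(\infty)$ and $x_{m+1}=\phi_{P_m^{q_{m-1}}}(x_{m-1})$ to the attracting fixed point of the M\"obius map of $P_m$, using $|\nu_-|=|\nu_+|^{-1}$ (determinant $\pm1$) together with the entry-ratio bounds of Lemmas \ref{ratios1}, \ref{ratios3}, \ref{ratios2}, \ref{qratios}, and you obtain summability from the quadratic recursion $|d_{m+2}|\ge\frac{r_4-r_3}{r_4}|d_m|^2$. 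The paper instead compares consecutive terms directly: writing $P_m=P_{m-1}\cdot\bigl(P_{m-1}^{q_{m-2}-1}P_{m-2}\bigr)$ and using $\det P_{m-1}=\pm1$ it computes $\left|\frac{a_m}{c_m}-\frac{a_{m-1}}{c_{m-1}}\right|=\frac{|c_m'|}{|c_m||c_{m-1}|}\le\frac{1}{t_1|c_{m-1}|^2}\le\frac{1}{t_1L'^{2k_{m-1}}}$, importing the exponential lower bound on $|c_m|$ from Proposition \ref{limc_mexists}; for irrationality it also runs an approximation estimate, but concludes that $\frac{a}{b}=\frac{a_m}{c_m}$ for all large $m$ and contradicts $\gcd(a_m,c_m)=1$ with $|c_m|\to\infty$, whereas yours is the cleaner Liouville-type comparison of $|x_m-M|\ge\frac{1}{q|c_m|}$ against $|x_m-M|=O(|c_m|^{-2})$. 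What each buys: your argument decouples this lemma from the growth-rate machinery of Section \ref{case2} (no appeal to $L$ or Proposition \ref{limc_mexists}), at the cost of the eigenvalue and fixed-point bookkeeping; the paper's version is computationally lighter because it reuses results already established. One routine point you should spell out: to pass from $\sum_{j\ge m}O(|d_j|^{-2})$ to $O(|c_m|^{-2})$ you need $|d_j|\ge|d_m|$ for all $j\ge m$ (not just along one parity class), e.g.\ $|d_{m+1}|>|d_m|$, which does hold and is in fact established inside the proof of Lemma \ref{ratios1}, but is not stated in your write-up; likewise your fixed-point formula implicitly needs $|\mu_+|>|\mu_-|$, which your own trace bound supplies for all large $m$.
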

\begin{proof}
We have
\begin{equation*}
\lim_{m\rightarrow\infty}\frac{\log|c_m|}{k_m}=\log L>0.
\end{equation*}
Thus there exists $L'>1$ such that for all sufficiently large $m\in\mathbb{N}$, we have
\begin{equation*}
|c_{m-1}|>L'^{k_{m-1}}.
\end{equation*}
Let
\begin{equation*}
P_{m-1}^{q_{m-2}-1}P_{m-2}=:
\begin{bmatrix}
a_m' & b_m'\\
c_m' & d_m'
\end{bmatrix}.
\end{equation*}
Then for $m\in\mathbb{N}$ sufficiently large, we have
\begin{align*}
\left|\frac{a_m}{c_m}-\frac{a_{m-1}}{c_{m-1}}\right|&=\left|\frac{a_{m-1}a_{m-2}'+b_{m-1}c_{m-2}'}{c_{m-1}a_{m-2}'+d_{m-1}c_{m-2}'}-\frac{a_{m-1}}{c_{m-1}}\right|\\
&=\left|\frac{\left(b_{m-1}-\dfrac{a_{m-1}d_{m-1}}{c_{m-1}}\right)c_m'}{c_{m-1}a_{m-2}'+d_{m-1}c_{m-2}'}\right|
=\frac{|c_m'|}{|c_m||c_{m-1}|}\\
&\leq\frac{|c_m'|}{t_1|c_m'||c_{m-1}|^2}
\leq\frac{1}{t_1L'^{2k_{m-1}}}.
\end{align*}
By a geometric series argument, using \eqref{kngrowth}, the sequence $\frac{a_m}{c_m}$ is Cauchy and so converges. The fact that the limit is between $-1$ and $1$ follows from Lemma \ref{ratios1}. It remains to show the limit is irrational. Suppose for a contradiction that it is rational and let it be $\frac{a}{b}$ where $a,b\in\mathbb{N}$. Let $N\in\mathbb{N}$ be sufficiently large so that for all $m\geq N$, the above inequality holds and
$L'^{-2k_{m-1}}<\frac{1}{2}$.
Then for all $m\geq N$, we have
\begin{align*}
\left|\frac{a}{b}-\frac{a_m}{c_m}\right|&\leq\sum_{i=m}^{\infty}\left|\frac{a_{i+1}}{c_{i+1}}-\frac{a_i}{c_i}\right|
<\sum_{i=m}^{\infty}\frac{1}{t_1L'^{2k_i}}
=\frac{1}{t_1L'^{2k_m}}\sum_{i=0}^{\infty}L'^{2k_m-2k_{m+i}}\\
&=\frac{1}{t_1L'^{2k_m}}\sum_{i=0}^{\infty}\prod_{j=0}^{i-1}L'^{2k_{m+j}-2k_{m+j+1}}\\
&<\frac{1}{t_1L'^{2k_m}}\sum_{i=0}^{\infty}\prod_{j=0}^{i-1}L'^{-2k_{m+j-1}}\\
&<\frac{1}{t_1L'^{2k_m}}\sum_{i=0}^{\infty}\left(\frac{1}{2}\right)^i\\
&=\frac{2}{t_1L'^{2k_m}}.
\end{align*}
Thus
\begin{equation*}
\frac{|ac_m-ba_m|}{|bc_m|}<\frac{2}{t_1L'^{2k_m}}.
\end{equation*}
Suppose that $\frac{a}{b}\neq\frac{a_m}{c_m}$. Then we have
\begin{equation*}
\frac{1}{|bc_m|}<\frac{2}{t_1L'^{2k_m}}.
\end{equation*}
Thus
\begin{equation*}
\frac{t_1L'^{2k_m}}{2|b|}<|c_m|
%\end{equation*}
\text{\ \ \ or\ \ \ }
%\begin{equation*}
\left(\frac{t_1}{2|b|}\right)^{1/k_m}L'^2<|c_m|^{1/k_m}.
\end{equation*}
Thus if there are infinitely many $m\in\mathbb{N}$ such that $\frac{a}{b}\neq\frac{a_m}{c_m}$, then we have $L'^2\leq L'$, a contradiction since $L>1$. So for sufficiently large $m\in\mathbb{N}$, we have $\frac{a}{b}=\frac{a_m}{c_m}$. But for all $m\in\mathbb{N}$, we have $\gcd(a_m,c_m)=1$ and $\lim_{m\rightarrow\infty}|a_m|=\lim_{m\rightarrow\infty}|c_m|=\infty$ and so this cannot be the case either. Thus the limit must be irrational.
\end{proof}
\begin{remark}
Let $q_m$, $a_m$ and $c_m$ as defined in Notation \ref{P_1P_2}. We will denote
\begin{equation*}
M:=\lim_{m\rightarrow\infty}\frac{a_m}{c_m}.
\end{equation*}
\end{remark}
\begin{lemma}\label{n_jratiolimit}
Let $q_m$, $P_m$, $Q_n$, $e_n$, $f_n$, $g_n$, and $h_n$ be as defined in Notation \ref{P_1P_2}. Let $n_1,n_2,\ldots,n_j,\ldots,$ be the list of natural numbers such that for each $n_j$, there exists $2\leq m_1<m_2<...<m_l$ and $n_{j,1},\ldots,n_{j,l}\in\mathbb{N}$ with the following properties:
\begin{enumerate}
\item for all $1\leq i\leq l$, we have $n_{j,i}\leq q_{m_i-1}$
\item for all $2\leq i\leq l$, if $n_{j,i}=q_{m_i-1}$, then $m_{i-1}+2\leq m_i$
\item $Q_n=(P_{m_l})^{n_{j,l}}(P_{m_{l-1}})^{n_{j,l-1}}...(P_{m_1})^{j,n_1}$.
\end{enumerate}

We have $\lim_{j\rightarrow\infty}\frac{e_{n_j}}{g_{n_j}}$ and $\lim_{j\rightarrow\infty}\frac{f_{n_j}}{h_{n_j}}$ both exist and are equal to $M$.
\end{lemma}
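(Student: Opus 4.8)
The plan is to prove $\lim_{j\to\infty}\frac{e_{n_j}}{g_{n_j}}=M$ directly and then obtain the statement for $\frac{f_{n_j}}{h_{n_j}}$ almost for free. Since $\det Q_{n_j}=\pm 1$ one has the exact identity $\left|\frac{e_{n_j}}{g_{n_j}}-\frac{f_{n_j}}{h_{n_j}}\right|=\frac{1}{|g_{n_j}||h_{n_j}|}$; by Lemma~\ref{prodrelc_m} we have $|h_{n_j}|>|g_{n_j}|$, and $|g_{n_j}|\to\infty$ by Proposition~\ref{rootgnj} (its $n_j$-th root tends to $L>1$), so $\frac{e_{n_j}}{g_{n_j}}$ and $\frac{f_{n_j}}{h_{n_j}}$ must have the same limit. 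I would also record at the outset that $m_l\to\infty$ as $j\to\infty$: otherwise $l$, the $n_{j,i}\le q_{m_i-1}$, and hence $n_j=\sum_i n_{j,i}k_{m_i}$ would all be bounded.

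To control $\frac{e_{n_j}}{g_{n_j}}$, I would split off the top block, writing $Q_{n_j}=(P_{m_l})^{n_{j,l}}R_j$ with $R_j=(P_{m_{l-1}})^{n_{j,l-1}}\cdots(P_{m_1})^{n_{j,1}}$ ($R_j$ the identity if $l=1$). Put $\begin{bmatrix}\alpha&\beta\\\gamma&\delta\end{bmatrix}:=(P_{m_l})^{n_{j,l}}$ and $\begin{bmatrix}e'&f'\\g'&h'\end{bmatrix}:=R_j$, so that $e_{n_j}=\alpha e'+\beta g'$ and $g_{n_j}=\gamma e'+\delta g'$. A one-line computation using $\det(P_{m_l})^{n_{j,l}}=\pm1$ gives
\[ \left|\frac{e_{n_j}}{g_{n_j}}-\frac{\alpha}{\gamma}\right|=\frac{|g'|}{|g_{n_j}|\,|\gamma|}, \qquad \frac{\alpha}{\gamma}=\frac{a_{m_l,n_{j,l}}}{c_{m_l,n_{j,l}}}. \]
The key point is a lower bound for $|g_{n_j}|$. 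Running the sign analysis of Lemma~\ref{positive} on the product $(P_{m_l})^{n_{j,l}}\cdot R_j$, with the uniform ratio bounds of Lemmas~\ref{qratios} and~\ref{prodrelc_m}, one finds that either $|g_{n_j}|=|\gamma||e'|+|\delta||g'|$ or $|g_{n_j}|=|\delta||g'|-|\gamma||e'|$ with $|\gamma||e'|\le (r_3/r_4)^2|\delta||g'|$ in the second case; in both cases $|g_{n_j}|\ge\frac{r_4^2-r_3^2}{r_4^2}|\delta||g'|$. Since $|\delta|\ge|\gamma|$, this yields $\left|\frac{e_{n_j}}{g_{n_j}}-\frac{a_{m_l,n_{j,l}}}{c_{m_l,n_{j,l}}}\right|\le\frac{r_4^2}{r_4^2-r_3^2}\cdot\frac{1}{|\gamma|^2}$, and by the induction used in the proof of Lemma~\ref{c_mrelations} one has $|\gamma|=|c_{m_l,n_{j,l}}|\ge t_1^{\,n_{j,l}-1}|c_{m_l}|^{n_{j,l}}\ge|c_{m_l}|$ once $m_l$ is large enough that $t_1|c_{m_l}|\ge1$; hence $\left|\frac{e_{n_j}}{g_{n_j}}-\frac{a_{m_l,n_{j,l}}}{c_{m_l,n_{j,l}}}\right|\to0$.

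It then remains to compare $\frac{a_{m_l,n_{j,l}}}{c_{m_l,n_{j,l}}}$ with $\frac{a_{m_l}}{c_{m_l}}$. Telescoping along the powers $P_{m_l},P_{m_l}^2,\dots,P_{m_l}^{n_{j,l}}$ and using $\det P_{m_l}^{n}=\pm1$ gives $\left|\frac{a_{m_l,n+1}}{c_{m_l,n+1}}-\frac{a_{m_l,n}}{c_{m_l,n}}\right|=\frac{|c_{m_l}|}{|c_{m_l,n+1}||c_{m_l,n}|}\le t_1^{-(2n-1)}|c_{m_l}|^{-2n}$, and summing the geometric series $\left|\frac{a_{m_l,n_{j,l}}}{c_{m_l,n_{j,l}}}-\frac{a_{m_l}}{c_{m_l}}\right|\le\frac{t_1}{t_1^2|c_{m_l}|^2-1}\to0$. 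Combining this with the previous paragraph, with $\lim_{m\to\infty}\frac{a_m}{c_m}=M$ (Lemma~\ref{limitratios1}), and with $m_l\to\infty$ shows $\frac{e_{n_j}}{g_{n_j}}\to M$, hence also $\frac{f_{n_j}}{h_{n_j}}\to M$.

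The main obstacle is obtaining a lower bound on $|g_{n_j}|$ robust enough to survive division by $|g'|$: expanding $|g_{n_j}|$ through the product estimates of Lemma~\ref{prodrelc_m} leaves an uncontrolled factor $(t_2/t_1)^{n_{j,l}}$ in the numerator, so one must instead keep $|g_{n_j}|=|\gamma e'+\delta g'|$ intact and peel off the single dominant summand $|\delta||g'|$ via the determinant/sign argument of Lemma~\ref{positive}, which is precisely what makes the troublesome $|g'|$ cancel. Two points will need care: the case $l=1$ (then $R_j$ is the identity, $g'=0$, and the first comparison is an equality), and checking via the ratio bounds that $\alpha,\beta,\gamma,\delta,e',f',g',h'$ are all nonzero so that the alternative in Lemma~\ref{positive} is a genuine dichotomy.
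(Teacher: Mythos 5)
Your argument is correct, and its engine is the same one the paper uses: a determinant identity comparing $\frac{e_{n_j}}{g_{n_j}}$ with the corresponding ratio of a left factor, the uniform bounds $\frac{r_1}{r_2}\leq\cdot\leq\frac{r_3}{r_4}$ to prevent cancellation in the denominator, and the identity $\left|\frac{e_{n_j}}{g_{n_j}}-\frac{f_{n_j}}{h_{n_j}}\right|=\frac{1}{|g_{n_j}h_{n_j}|}$ to transfer the limit to the second ratio. The difference is the choice of split. You peel off the entire leading power $(P_{m_l})^{n_{j,l}}$, which forces two extra steps: the lower bound $|c_{m_l,i}|\geq t_1^{i-1}|c_{m_l}|^{i}$ and the telescoping estimate bringing $\frac{a_{m_l,n_{j,l}}}{c_{m_l,n_{j,l}}}$ back to $\frac{a_{m_l}}{c_{m_l}}$. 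The paper instead peels off only the single prefix copy $Q_{k_m}=P_m$ with $k_{m+1}>n_j\geq k_m$, writes $Q_{n_j}=Q_{k_m}Q_{n_j-k_m}$, and compares directly with $\frac{e_{k_m}}{g_{k_m}}=\frac{a_m}{c_m}\rightarrow M$, bounding the error by $\frac{r_4^2}{(r_4^2-r_3^2)|g_{k_m}|}$ via exactly the same reverse-triangle estimate you use for $|g_{n_j}|$; no telescoping through powers is needed, so that split is more economical, while yours gives the slightly sharper quantitative byproduct that the error against $\frac{a_{m_l}}{c_{m_l}}$ is $O(|c_{m_l}|^{-2})$. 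Two small points to tidy: justify applying Lemma \ref{prodrelc_m} to $R_j$ by noting, as in the proof of Lemma \ref{Q_nproductP_n}, that $R_j=Q_{n_j-n_{j,l}k_{m_l}}$ and so is itself an admissible product of the stated form; and observe that the reverse triangle inequality together with $|\gamma|\leq\frac{r_3}{r_4}|\delta|$ (Lemma \ref{qratios}) and $|e'|\leq\frac{r_3}{r_4}|g'|$ (Lemma \ref{prodrelc_m}) already yields $|g_{n_j}|\geq\left(1-\frac{r_3^2}{r_4^2}\right)|\delta||g'|$, so the full sign analysis of Lemma \ref{positive} is not actually needed there.
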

\begin{proof}
By Lemma \ref{limitratios1}, we have that
\begin{equation*}
\lim_{m\rightarrow\infty}\frac{e_{k_m}}{g_{k_m}}
\end{equation*}
exists and is equal to $M$. We will prove that the desired limit is $M$. Let $\epsilon>0$. Choose $N\in\mathbb{N}$ such that for all $m\geq N$, we have
\begin{equation*}
\left|\frac{e_{k_m}}{g_{k_m}}-M\right|<\frac{\epsilon}{2}
%\end{equation*}
\text{\ \ \ and\ \ \ }
%\begin{equation*}
\frac{r_4^2}{(r_4^2-r_3^2)|g_{k_m}|}<\frac{\epsilon}{2}.
\end{equation*}
Let $n_j\geq k_N$. Then $k_{m+1}>n_j\geq k_m$ for some $m\geq N$. We have $Q_{n_j}=Q_{k_m}Q_{n_j-k_m}$. Thus we have the following using Lemma \ref{prodrelc_m}:
\begin{align*}
\left|\frac{e_{n_j}}{g_{n_j}}-\frac{e_{k_m}}{g_{k_m}}\right|&=\left|\frac{e_{k_m}e_{n_j-k_m}+f_{k_m}g_{n_j-k_m}}{g_{k_m}e_{n_j-k_m}+h_{k_m}g_{n_j-k_m}}-\frac{e_{k_m}}{g_{k_m}}\right|\\
&=\left|\frac{g_{n_j-k_m}(f_{k_m}g_{k_m}-e_{k_m}h_{k_m})}{g_{k_m}(g_{k_m}e_{n_j-k_m}+h_{k_m}g_{n_j-k_m})}\right|\\
&=\frac{|g_{n_j-k_m}|}{|g_{k_m}||g_{k_m}e_{n_j-k_m}+h_{k_m}g_{n_j-k_m}|}\\
&\leq\frac{|g_{n_j-k_m}|}{|g_{k_m}|(|h_{k_m}g_{n_j-k_m}|-|g_{k_m}e_{n_j-k_m}|)}\\
&<\frac{|g_{n_j-k_m}|}{|g_{k_m}|\left(\left|h_{k_m}g_{n_j-k_m}\right|-\dfrac{r_3^2}{r_4^2}\left|h_{k_m}g_{n_j-k_m}\right|\right)}\\
&=\frac{r_4^2}{(r_4^2-r_3^2)|g_{k_m}h_{k_m}|}\\
&\leq\frac{r_4^2}{(r_4^2-r_3^2)|g_{k_m}|}\\
&<\frac{\epsilon}{2}.
\end{align*}
Thus
\begin{equation*}
\left|\frac{e_{n_j}}{g_{n_j}}-M\right|\leq\left|\frac{e_{n_j}}{g_{n_j}}-\frac{e_{k_m}}{g_{k_m}}\right|+\left|\frac{e_{k_m}}{g_{k_m}}-M\right|<\epsilon.
\end{equation*}
Thus
\begin{equation*}
\lim_{j\rightarrow\infty}\frac{e_{n_j}}{g_{n_j}}=M.
%\end{equation*}
%Also, we have
%\begin{equation*}
\text{\ \ \ and\ \ \ }
\left|\frac{e_{n_j}}{g_{n_j}}-\frac{f_{n_j}}{h_{n_j}}\right|=\frac{1}{|g_{n_j}h_{n_j}|},
\end{equation*}
from which the rest follow using Lemma \ref{prodrelc_m}.
\end{proof}
\begin{remark}
For the rest of the paper, we will assume that $G_1\neq\frac{-G_2}{M}$.
\end{remark}
\begin{proposition}\label{partiallimit}
Let $q_m$, $P_m$ and $Q_n$ be defined as in Notation \ref{P_1P_2}. Let $n_1,n_2,\ldots,n_j,\ldots,$ be the list of natural numbers such that for each $n_j$, there exists $2\leq m_1<m_2<...<m_l$ and $n_{j,1},\ldots,n_{j,l}\in\mathbb{N}$ with the following properties:
\begin{enumerate}
\item for all $1\leq i\leq l$, we have $n_{j,i}\leq q_{m_i-1}$
\item for all $2\leq i\leq l$, if $n_{j,i}=q_{m_i-1}$, then $m_{i-1}+2\leq m_i$
\item $Q_{n_j}=(P_{m_l})^{n_{j,l}}(P_{m_{l-1}})^{n_{j,l-1}}...(P_{m_1})^{j,n_1}$.
\end{enumerate}

Let 
\begin{equation*}
[G_1,G_2]Q_n=[G_{n+1},G_{n+2}]
\end{equation*}
for all $n\in\mathbb{N}$. We have
\begin{equation*}
\lim_{j\rightarrow\infty}|G_{n_j+1}|^{1/(n_j+1)}=\lim_{j\rightarrow\infty}|G_{n_j+2}|^{1/(n_j+2)}=L.
%\end{equation*}
%We also have
%\begin{equation*}
\text{\ \ \ and\ \ \ }
\lim_{j\rightarrow\infty}\frac{G_{n_j+1}}{G_{n_j+2}}=M.
\end{equation*}
\end{proposition}
\begin{proof}
For all $n_j$, we have $G_1e_{n_j}+G_2g_{n_j}=G_{n_j+1}$. By Lemma \ref{prodrelc_m}, $g_{n_j}\neq 0$. Thus $\frac{G_1e_{n_j}}{g_{n_j}}+G_2=\frac{G_{n_j+1}}{g_{n_j}}$. By Lemma \ref{n_jratiolimit}, we have
\begin{equation*}
\lim_{j\rightarrow\infty}\frac{G_{n_j+1}}{g_{n_j}}=MG_1+G_2.
\end{equation*}
Since $G_1\neq\frac{-G_2}{M}$, we have the limit is nonzero and so
\begin{equation*}
\lim_{j\rightarrow\infty}|G_{n_j+1}|^{1/(n_j+1)}=L.
\end{equation*}
follows from Proposition \ref{rootgnj}. The limit
\begin{equation*}
\lim_{j\rightarrow\infty}|G_{n_j+2}|^{1/(n_j+2)}=L
\end{equation*}
follows similarly. Also, for all $j\in\mathbb{N}$, we have
\begin{align*}
\frac{G_{n_j+1}}{G_{n_j+2}}&=\frac{G_1e_{n_j}+G_2g_{n_j}}{G_1f_{n_j}+G_2h_{n_j}}\\
&=\frac{e_{n_j}}{f_{n_j}}\cdot\frac{G_1+\dfrac{G_2g_{n_j}}{e_{n_j}}}{G_1+\dfrac{G_2h_{n_j}}{f_{n_j}}}.
\end{align*}
Applying Lemma \ref{n_jratiolimit} gives us the third limit with the observation that
\begin{equation*}
\lim_{j\rightarrow\infty}G_1+\frac{G_2h_{n_j}}{f_{n_j}}=G_1+\frac{G_2}{M}\neq 0.
\end{equation*}
\end{proof}
\begin{proof}[Proof of Theorem \ref{bigthm} for Case $2$]
Let $n_1,n_2,\ldots,n_j,\ldots,$ be the list of natural numbers such that for each $n_j$, there exists $2\leq m_1<m_2<...<m_l$ and $n_{j,1},\ldots,n_{j,l}\in\mathbb{N}$ with the following properties:
\begin{enumerate}
\item for all $1\leq i\leq l$, we have $n_{j,i}\leq q_{m_i-1}$
\item for all $2\leq i\leq l$, if $n_{j,i}=q_{m_i-1}$, then $m_{i-1}+2\leq m_i$
\item $Q_n=(P_{m_l})^{n_{j,l}}(P_{m_{l-1}})^{n_{j,l-1}}...(P_{m_1})^{j,n_1}$.
\end{enumerate}
For all $j\in\mathbb{N}$, we have $n_{j+1}-n_j<k_2$ by Lemma \ref{Q_nproductP_n}. We can deduce that there exists $C>0$ such that for all $n\in\mathbb{N}$ there exists $n_j<n$ and integers $C_1,C_2<C$ such that $G_n=C_1G_{n_j+1}+C_2G_{n_j+2}$. By Proposition \ref{partiallimit}, we can deduce that
\begin{equation*}
\limsup_{n\rightarrow\infty}|G_n|^{1/n}=L.
\end{equation*}
Also, out of all of the finite possibilities for $C_1$ and $C_2$, we observe that $MC_1+C_2\neq 0$ since $M$ is irrational. Let $M'$ denote the minimal possible value of $C_1+MC_2$ in absolute value. Then $M'>0$. By Proposition \ref{partiallimit}, for all $1\leq t<k_2$, we have
\begin{equation*}
\liminf_{j\rightarrow\infty}\frac{|G_{n_j+t}|}{|G_{n_j+1}|}\geq M'.
\end{equation*}
It follows that
\begin{equation*}
\liminf_{n\rightarrow\infty}|G_n|^{1/n}=L.
\end{equation*}
\end{proof}
\section{Future Work}
There are a couple of different directions this research can go in. The first direction involves studying the growth rates of generalised Fibonacci sequences produced by other patterns of words. Here we have examined Sturmian words, but there are other types of words as well. Some examples are words that follow a Thue-Morse pattern, as well as other morphisms. We could even try removing the condition $j,k\geq 2$ in Example \ref{entriessize}.

The other direction involves trying to calculate the exact growth rate of certain random Fibonacci sequences produced from words following such patterns and seeing how close to Viswanath's constant we can get. McLellan \cite{mclellan} used words following a periodic pattern to create a new way of calculating Viswanath's constant. By adding in new patterns into her method, we may be able to calculate Viswanath's constant even more accurately. We even might be able to calculate its exact value or at least shed some light on its nature (for example, if it's irrational, transcendental, etc.).

\section{Acknowledgements}
The research of Kevin G. Hare is supported in part by NSERC grant 2019-03930.
The research of J.C. Saunders is supported by an Azrieli International Postdoctoral Fellowship, as well as a Postdoctoral Fellowship at the University of Calgary.

\section{Competing Interest Statement}
The authors have no competing interests to declare.

\end{document}